 \newtheorem{thm}{Theorem}[section]
 \newtheorem{cor}[thm]{Corollary}
 \newtheorem{lem}[thm]{Lemma}
 \newtheorem{prop}[thm]{Proposition}
 \theoremstyle{definition}
 \newtheorem{defn}[thm]{Definition}
 \theoremstyle{remark}
 \newtheorem{rem}[thm]{Remark}
 \theoremstyle{remark}
 \newtheorem{example}[thm]{Example}
 \theoremstyle{definition}
 \newtheorem{notn}[thm]{Notation}
 \numberwithin{equation}{section}
 \newcommand{\Ver}{\mathrm{Ver}}
 \newcommand{\Par}{\mathrm{Par}}
 \newcommand{\ord}{\mathrm{ord}}
 \newcommand{\Vol}{\mathrm{Vol}}
 \newcommand{\GL}{\mathrm{GL}}
 \newcommand{\PGL}{\mathrm{PGL}}
 \newcommand{\Type}{\mathrm{Type}}
 \newcommand{\inv}{\mathrm{inv}}
 \renewcommand{\mod}{\mathrm{mod}}
 \newcommand{\fE}{\mathfrak E}
 \newcommand{\cO}{\mathcal{O}}
 \newcommand{\cB}{\mathcal{B}}
 \renewcommand{\cD}{\mathcal{D}}
 \newcommand{\cF}{\mathcal{F}}
 \renewcommand{\cL}{\mathcal{L}}
 \newcommand{\R}{\mathbb{R}}
 \newcommand{\C}{\mathbb{C}}
 \newcommand{\F}{\mathbb{F}}
 \newcommand{\M}{\mathbb{M}}
 \newcommand{\Q}{\mathbb{Q}}
 \newcommand{\Z}{\mathbb{Z}}
 \newcommand{\bp}{\mathbf{p}}
 \newcommand{\bs}{\setminus}
 \newcommand{\Fi}{F_\infty}
 \newcommand{\bg}{\overline{\Gamma}}
 \newcommand{\G}{\Gamma}
 \newcommand{\La}{\Lambda}
 \newcommand{\qbinom}[2]{\left[\begin{matrix} #1\\ #2\end{matrix}\right]_q}
\begin{document}

\title[On finite arithmetic simplicial complexes]
{On finite arithmetic simplicial complexes}

\author{Mihran Papikian}

\address{Department of Mathematics, Pennsylvania State University, University Park, PA 16802}

\email{papikian@math.psu.edu}

\thanks{The author was supported in part by NSF grant DMS-0801208.}

\subjclass{Primary 11F06, 11G09; Secondary 20E08}


\begin{abstract}
We compute the Euler-Poincar\'e characteristic of quotients of the
Bruhat-Tits building of $\PGL(n)$ under the action of arithmetic
groups arising from central division algebras over rational function
fields of positive characteristic. We use this result to determine
the structure of the quotient simplicial complex in certain cases.
\end{abstract}


\maketitle


\section{Introduction} The purpose of this article is to generalize
to higher dimensions the genus formula for modular curves of
$\cD$-elliptic sheaves proven in \cite{PapGenus}. The proof of this
genus formula given in \cite{PapGenus} relies in part on the
arithmetic of $\cD$-elliptic sheaves. In this paper we avoid the use
of the theory of $\cD$-elliptic sheaves, and work exclusively within
the arithmetic of central division algebras over $F:=\F_q(T)$; here
$\F_q$ denotes the finite field with $q$ elements and $T$ is an
indeterminate.

Denote by $A=\F_q[T]$ the subring of $F$ formed by the polynomials
in $T$. For $0\neq f\in A$, let $\deg(f)$ be the degree of $f$ as a
polynomial in $T$, and put $\deg(0)=+\infty$. For $f/g\in F$ with
$f,g\in A$, let $\deg(f/g):=\deg(f)-\deg(g)$. Then $-\deg$ defines a
valuation on $F$; the corresponding place is denoted by $\infty$.
Let $\Fi$ be the completion of $F$ at $\infty$. Let $n\geq 2$ and
let $D$ be a central division algebra over $F$ of dimension $n^2$.
Assume $D\otimes_F F_\infty$ is isomorphic to the matrix algebra
$\M_n(\Fi)$. Fix a maximal $A$-order $\La$ in $D$ and denote by
$\G:=\La^\times$ its group of units. Let $\cB$ be the Bruhat-Tits
building of $\PGL_n(\Fi)$. The group $\G$ acts on $\cB$, and the
quotient $\G\bs\cB$ is a finite simplicial complex. The genus
formula in \cite{PapGenus} is equivalent to a formula for the
Euler-Poincar\'e characteristic $\chi(\G\bs\cB)$ when $n=2$. In this
paper we generalize this formula to arbitrary prime $n$. We also
determine the possible stabilizers in $\G$ of simplices of $\cB$,
and the number of $\G$-orbits of simplices with a given stabilizer
(Theorem \ref{thmChi}). These results are sufficient for determining
$\G\bs\cB$ in the case when $D$ is ramified at exactly two rational
places (Theorem \ref{LastThm}): in this special case, $\G\bs\cB$ is
a sort of a ``multi-layered'' $(n-1)$-simplex. I am not aware of
other instances where the quotient $\G\bs\cB$ is explicitly
determined when $n\geq 3$ (but see \cite{Soule} for $\GL_n(A)\bs
\cB$).

The quotients $\G\bs\cB$ play an important role in many arithmetic
problems, e.g. the theory of automorphic forms over function fields.
For congruence subgroups $\G$ of $\GL_2(A)$, the quotient graphs
$\G\bs\cB$ have been extensively studied by Gekeler and others in
relation to the theory of Drinfeld modular forms, cf.
\cite{GekelerKF}, \cite{GN}, \cite{GR}.

The proof of Theorem \ref{thmChi} uses two key ingredients. One is
Serre's theory relating Euler-Poincar\'e characteristics of discrete
subgroups of non-archimedean Lie groups to measures, and the other
is Eichler's formula for the number of non-equivalent optimal
embeddings of an order into a central simple algebra.


\subsection*{Notation}

The following notation is fixed throughout the paper.

\vspace{0.1in}

$|F|$ = the set of places of $F$.

$F_x$ = the completion of $F$ at $x\in |F|$.

$\cO_x = \{z\in F_x\ |\ \ord_x(z)\geq 0\}$ = the ring of integers of
$F_x$.

$\pi_x = \{z\in F_x\ |\ \ord_x(z)> 0\}$ = the maximal ideal of
$\cO_x$.

$\F_x=\cO_x/\pi_x$.

$q_x=\#\F_x$.

$\deg(x)=[\F_x:\F_q]$.

\vspace{0.1in}

Since the place $\infty$ plays a special role in our arguments, to
simplify the notation we put $K:=F_\infty$, $\cO:=\cO_\infty$,
$\pi:=\pi_\infty$, $\ord:=\ord_\infty$.

\section{Bruhat-Tits building of $\PGL_n(K)$}

\subsection{Simplicial complexes} By a simplicial complex we mean a
usual abstract simplicial complex, cf. \cite[p. 15]{Munkres}, except
that we allow for two distinct simplices of the same positive
dimension to have the same sets of vertices. More precisely, a
\textit{simplicial complex} $X$ is a collection of non-empty sets
$$
S_0(X),\ S_1(X), \dots,\ S_n(X), \quad n\leq \infty
$$
where each $s\in S_i(X)$ is a subset of $S_0(X)$ of cardinality
$i+1$, and each subset of $s$ of cardinality $j+1$, $0\leq j\leq i$,
is in $S_j(X)$. We call $s\in S_i(X)$ an \textit{$i$-simplex} and
each nonempty subset of $s$ a \textit{face} of $s$. The
\textit{vertices} of the simplex $s$ are the one-point elements of
the subset $s\subset S_0(X)$. With this terminology, the elements of
$S_0(X)$ are called the vertices of $X$. We will denote
$\Ver(X)=S_0(X)$. If $n$ is finite, we call it the
\textit{dimension} of $X$. $X$ is \textit{finite} if it is a finite
set. The \textit{generalized $m$-th degree} of $s\in X$, denoted
$\deg_X^m(s)$, is the number of elements of $S_m(X)$ having $s$ as a
face.

Let $X$ be finite of dimension $n$. The \textit{Euler-Poincar\'e
characteristic} of $X$ is
$$
\chi(X):=\sum_{i=0}^n (-1)^i\# S_i(X).
$$
One can define the cohomology groups of $X$ (with $\Q$-coefficients)
$H^\ast(X, \Q)$ in the usual manner, cf. \cite{Munkres}. Then
$$
\chi(X)=\sum_{i=0}^{n}(-1)^i\dim_\Q H^i(X, \Q).
$$

We say that a group $G$ \textit{acts} on $X$ if $G$ acts on the set
of simplices of $X$ and this action satisfies the following
condition: if $s\in S_i(X)$ has vertices $\{v_0, \dots, v_i\}$, then
$gs\in S_i(X)$ has vertices $\{gv_0, \dots, gv_i\}$, $g\in G$. We
single out an extra condition on the action of $G$:
\begin{equation}\label{eq-star}
\text{If $gs=s$ for a simplex $s$, then $g$ fixes all the vertices
of $s$}.
\end{equation}
If $G$ acts on $X$ and satisfies (\ref{eq-star}), then there is a
natural quotient simplicial complex $Y:=G\bs X$ such that
$S_i(Y)=S_i(X)/G$ for all $i$. For $s\in X$, denote $O_s=G\cdot s$
the orbit of $s$ under the action of $G$, i.e., $O_s=\{gs\ |\ g\in
G\}$. The action of $G$ decomposes $S_i(X)$ into a disjoint union of
orbits $O_s$, and the set of these orbits is in bijection with
$S_i(Y)$. Denote
$$
G_s = \{g\in G\ |\ gs=s\}
$$
the stabilizer of $s$. Let $\tilde{s}\in Y$ and $s$ be a preimage of
$\tilde{s}$ in $X$. We define $O_{\tilde{s}}=O_s$ and $\#
G_{\tilde{s}}=\# G_s$; the second definition makes sense since the
elements in the orbit $O_s$ have isomorphic stabilizers: $G_{gs}=g
G_s g^{-1}$.

\begin{lem}\label{lem2.1} Assume $G$ is finite and $X$ is finite of dimension $n$.
With previous notation,
$$
\chi(Y)=\frac{\chi(X)}{\# G}+\sum_{i=0}^n (-1)^i\sum_{\tilde{s}\in
S_i(Y)} \left(1-\frac{1}{\# G_{\tilde{s}}}\right).
$$
\end{lem}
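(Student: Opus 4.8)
The plan is to reduce everything to a dimension-by-dimension orbit count, since both $\chi(X)$ and $\chi(Y)$ are alternating sums over the $S_i$. Fix $i$ and decompose $S_i(X)$ into $G$-orbits; by the setup these orbits are precisely the fibers of the quotient map $S_i(X)\to S_i(Y)$, so they are indexed by $\tilde s\in S_i(Y)$. The one identity I need is the orbit-stabilizer theorem: since $G$ is finite, each orbit satisfies $\#O_{\tilde s}=\#G/\#G_{\tilde s}$, and this is well-posed because $\#G_{\tilde s}$ does not depend on the chosen preimage $s$ (the paper already records $G_{gs}=gG_sg^{-1}$).

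From this I would write $\#S_i(X)=\sum_{\tilde s\in S_i(Y)}\#O_{\tilde s}=\sum_{\tilde s\in S_i(Y)}\tfrac{\#G}{\#G_{\tilde s}}$, hence
\[
\#S_i(Y)-\frac{\#S_i(X)}{\#G}=\sum_{\tilde s\in S_i(Y)}\left(1-\frac{1}{\#G_{\tilde s}}\right),
\]
using that $\#S_i(Y)=\sum_{\tilde s\in S_i(Y)}1$. Multiplying by $(-1)^i$ and summing over $i$ from $0$ to $n$, the left-hand side telescopes into $\chi(Y)-\tfrac{1}{\#G}\chi(X)$ by the very definition of the Euler-Poincar\'e characteristic, while the right-hand side is exactly the asserted double sum. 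Rearranging gives the formula.

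I do not anticipate a genuine obstacle here; the content is entirely the orbit-stabilizer bookkeeping, and the only point requiring a word of care is the well-definedness of $\#G_{\tilde s}$, which is already guaranteed by condition \eqref{eq-star} and the conjugacy of stabilizers along an orbit. I would also remark at the outset that \eqref{eq-star} is what makes $Y=G\bs X$ a bona fide simplicial complex with $S_i(Y)=S_i(X)/G$, so that $\chi(Y)$ is defined and the fibers of the quotient map really are the $G$-orbits; after that the computation is a one-line alternating sum.
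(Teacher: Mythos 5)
Your proof is correct and is essentially the paper's own argument: both decompose $S_i(X)$ into $G$-orbits indexed by $S_i(Y)$, apply orbit-stabilizer to write $\# S_i(X)=\sum_{\tilde{s}\in S_i(Y)}\# G/\# G_{\tilde{s}}$, and then take the alternating sum over $i$. The only cosmetic difference is that the paper keeps the identity multiplied through by $\# G$ before summing, whereas you divide by $\# G$ first; the content is identical.
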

\begin{proof}  Since the orbits are disjoint, we have
\begin{align*}
\# S_i(X) &= \sum_{\tilde{s}\in S_i(Y)} \# O_{\tilde{s}} =
\sum_{\tilde{s}\in S_i(Y)} \frac{\# G}{\# G_{\tilde{s}}}\\ &=\#
G\#S_i(Y)+\sum_{\tilde{s}\in S_i(Y)} \# G\left(\frac{1}{\#
G_{\tilde{s}}}-1\right).
\end{align*}
Now take the alternating sums of both sides over $0\leq i\leq n$.
\end{proof}


\subsection{The building}\label{Sec2.2} A \textit{lattice} in $K^n$ is
any finitely generated $\cO$-submodule of $K^n$ which contains a
basis of this vector space; such a module is free of rank $n$. If
$x\in K^\times$ and $L$ is a lattice in $K^n$, then $xL$ is also a
lattice in $K^n$. Thus the group $K^\times$ acts on the set of
lattices $\cL$. Denote the quotient $\cL/K^\times$ by $\bar{\cL}$.

Define a simplicial complex $\cB$ as follows. Let
$\Ver(\cB)=\bar{\cL}$. A finite subset of $\bar{\cL}$ is an
$i$-simplex of $\cB$ if one can represent its elements by lattices
$L_0,\dots, L_i$ such that
\begin{equation}\label{eq-simB}
L_0\supsetneq L_1\supsetneq \cdots \supsetneq L_i\supsetneq \pi L_0,
\end{equation}
and each simplex is uniquely determined by its vertices. The
simplicial complex $\cB$ is called the \textit{Bruhat-Tits building}
of $\PGL_n(K)$. Each $L_i/\pi L_0$ is a module over $\cO/\pi\cO\cong
\F_q$, so from (\ref{eq-simB}) we get a strictly decreasing chain of
linear subspaces
$$
L_0/\pi L_0 \supset L_1/\pi L_0\supset \cdots \supset L_i/\pi
L_0\supset 0.
$$
Since $L_0/\pi L_0\cong \F_q^n$ is $n$-dimensional over $\F_q$,
$\cB$ is an infinite $(n-1)$-dimensional simplicial complex.
$\GL_n(K)$ acts on $\cB$ via its natural action on the lattices
(note that $\GL_n(K)$ preserves inclusions of lattices).

\begin{defn} Let $L\in \cL$ be spanned over $\cO$ by the vectors $e_1, \dots, e_n$
in $K^n$. Let $\det(L)$ be the determinant of the matrix having as
its columns the elements $e_1,\dots, e_n$. The \textit{type} of $L$
is the element of $\Z/n\Z$ defined by
$$
\Type(L):=\ord(\det(L))\ \mod\ n.
$$
Note that $\Type(L)=\Type(xL)$ for any $x\in K^\times$, so we can
associate types to the vertices of $\cB$. It is easy to check that
the vertices of any simplex in $\cB$ have distinct types. The action
of $\GL_n(K)$ on $\cB$ does not preserve the types of vertices (in
fact, $\GL_n(K)$ acts transitively on the vertices of $\cB$). A
matrix $g\in \GL_n(K)$ preserves the types of vertices if
$n|\ord(\det(g))$.
\end{defn}

\begin{notn}
Let $z$ be a parameter. Set $[0]_z=1$. For $m\geq 1$, let
$$
[m]_z:=(z^m-1)(z^{m-1}-1)\cdots (z-1).
$$
\end{notn}

\begin{defn}
An \textit{ordered partition of $n$} is an expression of $n$ as an
ordered sum of positive integers. We will write ordered partitions
as row vectors:
$$
\bp = (p_1,\dots, p_h), \quad p_1,\dots, p_h\geq 1,\quad n = p_1+
\cdots + p_h.
$$
Define the length of $\bp=(p_1,\dots, p_h)$ to be $\ell(\bp):=h$.
The set of all ordered partitions of $n$ will be denoted by
$\mathrm{Par}(n)$. For $\bp=(p_1,\dots, p_h)\in \Par(n)$, let
$$
\qbinom{n}{\bp}:=\frac{[n]_q}{[p_1]_q[p_2]_q\cdots [p_h]_q}.
$$
It is obvious that $\qbinom{n}{\bp}$ does not depend on the ordering
of the entries of $\bp$.
\end{defn}

\begin{lem}\label{lem3.3} Let $v\in \Ver(\cB)$. Then for $1\leq i\leq n-1$
$$
\deg_\cB^i(v)=\sum_{\substack{\bp\in \Par(n)\\ \ell(\bp)=i+1}}
\qbinom{n}{\bp}.
$$
\end{lem}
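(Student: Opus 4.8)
The plan is to identify the $i$-simplices of $\cB$ through $v$ with flags of subspaces of a fixed $n$-dimensional $\F_q$-vector space, and then to count those flags by type. Fix a lattice $L$ representing $v$ and put $V:=L/\pi L$, a vector space of dimension $n$ over $\cO/\pi\cO\cong\F_q$. First I would establish a bijection between the $i$-simplices of $\cB$ having $v$ as a vertex and the flags
\[
0\subsetneq W_i\subsetneq W_{i-1}\subsetneq\cdots\subsetneq W_1\subsetneq V
\]
of $i$ proper nonzero subspaces of $V$. The point is that any neighbour $u\neq v$ of $v$ has a \emph{unique} representative lattice $L_u$ with $\pi L\subsetneq L_u\subsetneq L$ (among the homothety translates $\pi^k M$ of any representative $M$ of $u$, exactly one lands strictly between $\pi L$ and $L$), and $u\mapsto L_u/\pi L$ then identifies the neighbours of $v$ with the proper nonzero subspaces of $V$. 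If $\{v,u_1,\dots,u_i\}$ is an $i$-simplex, choosing these window representatives turns the defining chain (\ref{eq-simB}) with top term $L$ into
\[
L\supsetneq L_{u_1}\supsetneq\cdots\supsetneq L_{u_i}\supsetneq\pi L,
\]
so the subspaces $L_{u_j}/\pi L$ form a flag as above; conversely a flag determines the lattices as preimages in $L$, hence the simplex. Verifying that this correspondence is well defined and bijective is the geometric heart of the proof — equivalently, it is the statement that the link of $v$ in $\cB$ is the flag complex of $V$ — while everything that follows is combinatorics.

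Granting the bijection, I would group the flags by \emph{type}. Setting $W_0=V$ and $W_{i+1}=0$, record the jumps $p_j:=\dim_{\F_q} W_{j-1}-\dim_{\F_q} W_j$ for $1\leq j\leq i+1$. Then each $p_j\geq 1$ and $p_1+\cdots+p_{i+1}=n$, so $\bp=(p_1,\dots,p_{i+1})$ is an ordered partition of $n$ of length $\ell(\bp)=i+1$; conversely every such partition arises as the type of some flag. Hence the flags through $v$ are partitioned into classes indexed by $\{\bp\in\Par(n):\ell(\bp)=i+1\}$.

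Finally I would count the flags of a fixed type $\bp$. Constructed step by step, $W_1$ may be chosen as a subspace of codimension $p_1$ in $V$ in $\qbinom{n}{(p_1,n-p_1)}$ ways, then $W_2$ as a subspace of codimension $p_2$ in $W_1$ in $\qbinom{n-p_1}{(p_2,n-p_1-p_2)}$ ways, and so on down to $W_i$. Multiplying these Gaussian binomial coefficients makes the intermediate factors $[\,n-p_1\,]_q,[\,n-p_1-p_2\,]_q,\dots$ telescope and leaves exactly $\qbinom{n}{\bp}=[n]_q/([p_1]_q\cdots[p_{i+1}]_q)$. Summing over all admissible $\bp$ yields
\[
\deg_\cB^i(v)=\sum_{\substack{\bp\in\Par(n)\\ \ell(\bp)=i+1}}\qbinom{n}{\bp},
\]
as claimed. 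The only subtle step is the bijection of the opening paragraph; the remainder is the standard evaluation of the number of flags of a prescribed type in $\F_q^n$ as a Gaussian multinomial coefficient.
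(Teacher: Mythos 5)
Your proposal is correct and follows essentially the same route as the paper: identify the $i$-simplices through $v$ with flags in $V=L/\pi L$, sort the flags by their sequence of dimension jumps (an ordered partition of $n$ of length $i+1$), and count each type by a telescoping product of Gaussian binomial coefficients. The only difference is that you spell out the lattice-theoretic bijection (unique representatives strictly between $\pi L$ and $L$) that the paper dismisses as ``easy to see from the definition of $\cB$,'' which is a welcome addition rather than a deviation.
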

\begin{proof} Suppose $v$ corresponds to the class of the lattice $L$.
Let $V:=L/\pi L\cong \F_q^{n}$. An \textit{$i$-flag} in $V$ is a
chain of vector subspaces
\begin{equation}\label{eq-flag}
V \neq \cF_1 \supsetneq \cF_{2}\supsetneq \cdots\supsetneq \cF_i\
\neq 0.
\end{equation}
From the definition of $\cB$ it is easy to see that the
$i$-simplices of $\cB$ having $v$ as a vertex are in bijection with
the $i$-flags in $V$. Next, to each $i$-flag we associate an ordered
partition of length $i+1$ as follows. Let $d_i:=\dim_{\F_q}\cF_i$.
Then to (\ref{eq-flag}) we associate
$$
(n-d_1, d_1-d_2, \dots, d_{i-1}-d_{i}, d_i).
$$
Denote the number of $k$-dimensional subspaces in $\F_q^m$ by
$\qbinom{m}{k}$. The number of distinct $i$-flags which map to
$\bp=(p_1,\dots, p_{i+1})$ is equal to
$$
f(\bp):=\qbinom{n}{n-p_1}\qbinom{n-p_1}{n-p_1-p_2}\qbinom{n-p_1-p_2}{n-p_1-p_2-p_3}\cdots
\qbinom{n-p_1-\cdots-p_i}{0}.
$$
It is well-known that $\qbinom{m}{k}=\frac{[m]_q}{[k]_q[m-k]_q}$, so
$f(\bp)=\qbinom{n}{\bp}$. Hence
$$
\deg_\cB^i(v)=\sum_{\substack{\bp\in \mathrm{Par}(n)\\
\ell(\bp)=i+1}} f(\bp)=\sum_{\substack{\bp\in \Par(n)\\
\ell(\bp)=i+1}} \qbinom{n}{\bp}.
$$
\end{proof}

\begin{example} Let $n=3$. The length-$2$ ordered partitions of $3$ are $(1,2)$ and
$(2,1)$, so
$$
\deg_\cB^1(v)=\qbinom{3}{(1,2)}+\qbinom{3}{(2,1)}=2\frac{(q^3-1)(q^2-1)(q-1)}{(q^2-1)(q-1)(q-1)}=2(q^2+q+1).
$$
Similarly,
$$
\deg_\cB^2(v)=\qbinom{3}{(1,1,1)}=\frac{(q^3-1)(q^2-1)(q-1)}{(q-1)^3}=(q^2+q+1)(q+1).
$$
\end{example}

We will need the next lemma in $\S$\ref{Sec3}.

\begin{lem}\label{lemAndrews} For $n\geq 1$, we have
$$
\sum_{\bp\in \Par(n)}
(-1)^{\ell(\bp)}\frac{1}{\ell(\bp)}\qbinom{n}{\bp} =
(-1)^{n}\frac{1}{n} [n-1]_q.
$$
\end{lem}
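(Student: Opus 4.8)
The plan is to prove the identity
$$
\sum_{\bp\in \Par(n)}
(-1)^{\ell(\bp)}\frac{1}{\ell(\bp)}\qbinom{n}{\bp} =
(-1)^{n}\frac{1}{n} [n-1]_q
$$
by encoding the left-hand side as a coefficient in a formal power series and recognizing the resulting generating function. Let me sketch the approach.

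Let me think about how to set up the generating function. An ordered partition $\bp = (p_1, \ldots, p_h)$ of $n$ with $\qbinom{n}{\bp} = [n]_q / \prod_j [p_j]_q$ suggests I should work with the exponential-type objects built from the sequence $[m]_q$. The natural move is to write $[n]_q/n!$... wait, but these are $q$-factorials, not ordinary ones. Let me reconsider.

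Let me sketch the generating function approach carefully.

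First I would divide through by $[n]_q$ to isolate the combinatorial core. Setting $a_m := 1/[m]_q$ for $m \geq 1$ (with $[0]_q = 1$), the left side becomes $[n]_q \sum_{\bp} (-1)^{\ell(\bp)} \frac{1}{\ell(\bp)} \prod_{j=1}^{\ell(\bp)} a_{p_j}$. The sum over ordered partitions of fixed length $h$ of $\prod_j a_{p_j}$ is exactly the coefficient of $z^n$ in $\bigl(\sum_{m\geq 1} a_m z^m\bigr)^h$. So introducing the formal power series $A(z) := \sum_{m\geq 1} \frac{z^m}{[m]_q}$, the entire inner sum equals $\sum_{h\geq 1} \frac{(-1)^h}{h} [z^n] A(z)^h = [z^n]\bigl(-\log(1 + A(z))\bigr)$. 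Thus the left-hand side equals $[n]_q \cdot [z^n]\bigl(-\log(1+A(z))\bigr)$, and the claim reduces to showing $[z^n]\bigl(-\log(1 + A(z))\bigr) = (-1)^n \frac{[n-1]_q}{n\,[n]_q}$.

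Next I would identify $1 + A(z)$ in closed form. The series $A(z) = \sum_{m\ge 1} z^m/[m]_q$ is a $q$-analogue of a known series, and $[m]_q = \prod_{k=1}^m (q^k - 1)$ is (up to normalization) the $q$-factorial, so $1 + A(z)$ should be one of the standard $q$-exponential functions $e_q$ or $E_q$, or expressible via an infinite product $\prod_k (1 - c\,q^{-k} z)^{\pm 1}$ by the $q$-binomial theorem. Once I have the product form, $-\log(1+A(z))$ becomes a sum of $\log(1 - \text{(geometric)}\, z)$ terms, whose Taylor coefficients are elementary geometric sums; I would then extract $[z^n]$ and simplify using $[n-1]_q/[n]_q = 1/(q^n - 1)$.

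The main obstacle will be pinning down the exact closed form of $1 + A(z)$ and the precise powers of $q$ in the product. The sequence $[m]_q = (q^m-1)\cdots(q-1)$ grows like $q^{m(m+1)/2}$, so the "right" variable is likely $z/q$ or a rescaled $z$, and I expect to need the correct normalization of the $q$-exponential (Euler's identities $\sum z^m/(q;q)_m = 1/\prod_k(1 - zq^k)$ or $\sum q^{\binom m2} z^m/(q;q)_m = \prod_k(1+zq^k)$, after converting $[m]_q$ into the Pochhammer symbol $(q;q)_m$). Matching conventions and keeping track of the sign $(-1)^n$ and the factor $q^{\binom n 2}$-type contributions is where the bookkeeping is delicate; everything downstream is a routine coefficient extraction. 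As a sanity check I would verify the formula for small $n$ (e.g. $n=1$ gives $-1$ on both sides, and $n=2$ gives $\frac{1}{2}(q-1)$) to fix the normalization before committing to the general argument.
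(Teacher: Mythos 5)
Your proposal is correct and is essentially the paper's own proof: the paper likewise packages the sum into the generating series $-\ln(1+\fE)$ with $\fE=\sum_{m\geq 1}(-x)^m/[m]_q$, invokes Euler's identity to write $1+\fE=\prod_{i\geq 0}(1-xq^i)^{-1}$, and extracts the coefficient of $x^n$ from $\sum_{i\geq 0}\ln(1-xq^i)$. The bookkeeping you flag as delicate in fact works out with no rescaling of the variable, since $[m]_q=(-1)^m(q;q)_m$ turns your $1+A(z)$ directly into Euler's product (with argument $-z$), after which the coefficient of $z^n$ is $(-1)^n/\bigl(n(q^n-1)\bigr)$ as required.
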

\begin{proof} In this proof we treat $q$ as a formal parameter and
manipulate infinite series and products ignoring the issues of
convergence (for a justification see \cite{Andrews}). We need to
prove the following:
$$
\sum_{h=1}^\infty \frac{(-1)^h}{h}\sum_{\substack{p_1+\cdots+p_h=n \\
p_1\geq 1,\dots ,p_h\geq 1}}\frac{(-1)^n}{[p_1]_q\cdots [p_h]_q} =
\frac{1}{n(q^n-1)}.
$$
We put the left hand-side into a generating series
$$
\sum_{n=1}^\infty x^n \sum_{h=1}^\infty \frac{(-1)^h}{h}\sum_{\substack{p_1+\cdots+p_h=n \\
p_1\geq 1,\dots ,p_h\geq 1}}\frac{(-1)^n}{[p_1]_q\cdots [p_h]_q}
$$
$$
=\sum_{h=1}^\infty \frac{(-1)^h}{h}\sum_{p_1\geq 1,\dots ,p_h\geq
1}\frac{(-x)^{p_1+\cdots +p_h}}{[p_1]_q\cdots [p_h]_q}
$$
$$
= \sum_{h=1}^\infty \frac{(-1)^h}{h} \left(\sum_{m=1}^\infty
\frac{(-x)^m}{[m]_q}\right)^h= -\ln(1+\fE),
$$
where $\fE:=\sum_{m=1}^\infty (-x)^m/[m]_q$. By a formula of Euler
\cite[Cor. 2.2]{Andrews}
$$
\fE=-1+\prod_{i=0}^\infty(1-xq^i)^{-1},
$$
so we have
$$
-\ln(1+\fE) = \sum_{i=0}^{\infty}\ln(1-xq^i)=-\sum_{i=0}^\infty
\sum_{n=1}^\infty \frac{(xq^i)^n}{n}
$$
$$
= -\sum_{n=1}^\infty \frac{x^n}{n}\sum_{i=0}^\infty q^{in} =
\sum_{n=1}^\infty \frac{x^n}{n(q^n-1)}.
$$
We conclude that the coefficient of $x^n$ in the initial generating
series is equal to $1/n(q^n-1)$, which finishes the proof.
\end{proof}

\subsection{Euler-Poincar\'e measure}\label{Sec2.3} Denote $G:=\PGL_n(K)$. $G$ is
a locally compact unimodular topological group. Let $dg$ be the Haar
measure on $\GL_n(K)$ normalized by $\Vol(\GL_n(\cO), dg)=1$. Let
$dz$ be the Haar measure on $K^\times$ normalized by
$\Vol(\cO^\times, dz)=1$. Let $dh:=dg/dz$ be the quotient measure on
$G=\GL_n(K)/K^\times$. Let $\G$ be a discrete subgroup of $G$ and
assume that $\G\bs G$ is compact. Since $G$ acts on $\cB$ via its
natural action on lattices in $\bar{\cL}$, $\G$ also acts on $\cB$.
Assume that $\G$ preserves the types of vertices of $\cB$. Then $\G$
satisfies (\ref{eq-star}), since the types of vertices of any
simplex are distinct. The quotient simplicial complex $\G\bs\cB$ is
finite since $\G\bs G$ is compact, cf. \cite[p. 139]{SerreCGD}. Let
$d\delta$ be the counting measure on $\G$, and $dh/d\delta$ be the
quotient measure on $\G\bs G$. The stabilizer $\G_t$ of $t\in \cB$
is finite since the stabilizer of $t$ in $G$ is bounded and $\G$ is
discrete in $G$; cf. \cite[p. 115]{SerreCGD}. The order $\#\G_s$
does not depend on the choice of $s$ in the orbit $\G\cdot s$.
Therefore, for $s\in \G\bs\cB$, we can define $\# \G_s$ as $\# \G_t$
for some preimage $t$ of $s$ in $\cB$.

\begin{thm}\label{thmEP} Assume $\G$ has a normal torsion-free subgroup of finite
index. Then
\begin{align*}
\chi(\G\bs\cB)=& \frac{1}{n}(-1)^{n-1}[n-1]_q\Vol\left(\G\bs G,
\frac{dh}{d\delta}\right)\\ &+\sum_{i=0}^{n-1} (-1)^i\sum_{s\in
S_i(\G\bs\cB)} \left(1-\frac{1}{\# \G_{s}}\right).
\end{align*}
\end{thm}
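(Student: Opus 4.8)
The plan is to reduce to the torsion-free case and then carry out an orbit-counting (Euler--Poincar\'e measure) computation. Let $\G'\trianglelefteq\G$ be the given normal torsion-free subgroup of finite index, set $\overline G=\G/\G'$, and let $X=\G'\bs\cB$. Since $\G'\bs G$ is compact, $X$ is finite, and $\overline G$ acts on it satisfying (\ref{eq-star}) because $\G$ preserves types; the quotient is $\overline G\bs X=\G\bs\cB$. Applying Lemma \ref{lem2.1} to $\overline G$ acting on $X$ gives
$$\chi(\G\bs\cB)=\frac{\chi(\G'\bs\cB)}{\#\overline G}+\sum_{i=0}^{n-1}(-1)^i\sum_{\tilde s\in S_i(\G\bs\cB)}\left(1-\frac{1}{\#\overline G_{\tilde s}}\right).$$
First I would observe that this already has the shape of the asserted formula: because $\G'$ acts freely (a torsion-free group meets each finite stabilizer trivially), the stabilizer in $\G$ of a lift $t$ injects into $\overline G$ with image $\overline G_{\tilde s}$, whence $\#\overline G_{\tilde s}=\#\G_t=\#\G_s$; and since $\G'\bs G\to\G\bs G$ is a degree-$\#\overline G$ covering, $\Vol(\G'\bs G)=\#\overline G\cdot\Vol(\G\bs G)$. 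Thus the theorem follows once the torsion-free identity $\chi(\G'\bs\cB)=\tfrac1n(-1)^{n-1}[n-1]_q\,\Vol(\G'\bs G,\,dh/d\delta)$ is established.

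For that identity I would compute $\chi(\G'\bs\cB)=\sum_i(-1)^i\#S_i(\G'\bs\cB)$ directly. Let $G^0\subseteq G$ be the index-$n$ subgroup preserving the types of vertices; by hypothesis $\G'\subseteq\G\subseteq G^0$. The group $G^0$ acts transitively on chambers with $G^0\bs\cB$ a single closed $(n-1)$-simplex, so its orbits on $i$-simplices are indexed by the $(i+1)$-element subsets $S\subseteq\Z/n\Z$ of types, and any two $i$-simplices with the same type-set are $G^0$-equivalent. Since any element fixing a vertex is type-preserving, $G_\sigma=G^0_\sigma$. Because $\G'$ is torsion-free it meets every (finite) stabilizer trivially, so for each orbit $G^0\cdot\sigma\cong G^0/G_\sigma$ the image in $\G'\bs G^0$ of each double coset has measure $\Vol(G_\sigma)$, and counting double cosets gives
$$\#S_i(\G'\bs\cB)=\sum_{\sigma\in G^0\bs S_i(\cB)}\frac{\Vol(\G'\bs G^0)}{\Vol(G_\sigma)}.$$

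Next I would identify the volumes and sum. With $\Vol(\GL_n(\cO),dg)=1$ the stabilizer of a vertex has $dh$-volume $1$, and the stabilizer of an $i$-simplex whose type-set has cyclic gap-partition $\bp$ is the parahoric obtained as the preimage of the parabolic of $\GL_n(\F_q)$ fixing the corresponding flag; by the flag count in Lemma \ref{lem3.3} this has index $\qbinom{n}{\bp}$ in a vertex stabilizer, so $\Vol(G_\sigma,dh)=1/\qbinom{n}{\bp}$ (a general type-set reduces to one containing $0$ by a type shift, which preserves the gap-partition and, as $G$ is unimodular, the Haar volume). Hence $\sum_{\sigma\in G^0\bs S_i(\cB)}\Vol(G_\sigma)^{-1}=\sum_{|S|=i+1}\qbinom{n}{\mathrm{gaps}(S)}$. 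Counting pairs (subset, marked element) shows that each composition of $n$ into $i+1$ parts arises from exactly $n$ such pairs while each subset carries $i+1$ markings, giving $\sum_{|S|=i+1}\qbinom{n}{\mathrm{gaps}(S)}=\tfrac{n}{i+1}\sum_{\ell(\bp)=i+1}\qbinom{n}{\bp}$. Taking the alternating sum over $i$ and substituting $h=i+1$ turns $\sum_i(-1)^i\sum_{|S|=i+1}\qbinom{n}{\mathrm{gaps}(S)}$ into $-n\sum_{\bp\in\Par(n)}(-1)^{\ell(\bp)}\tfrac{1}{\ell(\bp)}\qbinom{n}{\bp}$, which by Lemma \ref{lemAndrews} equals $(-1)^{n-1}[n-1]_q$. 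Since $[G:G^0]=n$ forces $\Vol(\G'\bs G^0)=\tfrac1n\Vol(\G'\bs G)$, this yields the torsion-free identity and hence the theorem. The conceptual core, and the step I expect to be the main obstacle, is the orbit-counting identity for $\#S_i(\G'\bs\cB)$, i.e. Serre's comparison of Euler--Poincar\'e characteristics with Haar measures, where the freeness of the $\G'$-action (hence the torsion-free hypothesis) is indispensable; the remaining delicate points are the bookkeeping producing the factor $\tfrac1n$ from the type-preserving subgroup and the combinatorial passage from subsets of $\Z/n\Z$ to ordered partitions that makes Lemma \ref{lemAndrews} applicable, while the parahoric volume $1/\qbinom{n}{\bp}$ is routine given Lemma \ref{lem3.3}.
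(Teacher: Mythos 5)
Your proposal is correct, and while the reduction step coincides with the paper's, the heart of your argument is genuinely different. Both proofs handle the passage from $\G'$ to $\G$ identically: apply Lemma \ref{lem2.1} to $\G/\G'$ acting on $\G'\bs\cB$, identify the quotient-group stabilizer orders with $\#\G_s$ via torsion-freeness of $\G'$, and use $\Vol(\G'\bs G)=[\G:\G']\cdot\Vol(\G\bs G)$. The difference is in how the torsion-free identity $\chi(\G'\bs\cB)=\frac{1}{n}(-1)^{n-1}[n-1]_q\Vol\left(\G'\bs G,\frac{dh}{d\delta}\right)$ is obtained. The paper invokes Serre's theory wholesale: $\G'$ is cocompact torsion-free, hence of type (FL), so the Euler--Poincar\'e measure $\mu$ satisfies $\chi(\G')=\Vol(\G'\bs G,\mu/d\delta)$; contractibility of $\cB$ gives $\chi(\G')=\chi(\G'\bs\cB)$; and the constant $\mu=\frac{1}{n}(-1)^{n-1}[n-1]_q\,dh$ is quoted from Serre (Theorem 7, p.~150 of \cite{SerreCGD}) and \cite{LaumonCDV}. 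You instead prove the identity by hand: you count simplices of $\G'\bs\cB$ by double cosets, $\#S_i(\G'\bs\cB)=\sum_\sigma \Vol(\G'\bs G^0)/\Vol(G^0_\sigma)$, which is valid precisely because torsion-freeness forces the $\G'$-action on each $G^0/G^0_\sigma$ to be free; you compute the parahoric volumes $1/\qbinom{n}{\bp}$ from the flag count in Lemma \ref{lem3.3}; you convert the sum over type-subsets of $\Z/n\Z$ to a sum over ordered partitions via the marked-subset bijection (producing the factor $\frac{n}{i+1}$, and using that $\qbinom{n}{\bp}$ is rotation-invariant); and you finish with Lemma \ref{lemAndrews} and $[G:G^0]=n$. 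In effect you re-derive Serre's Euler--Poincar\'e measure computation for $\PGL_n(K)$ from scratch. This is more self-contained --- no type (FL), no contractibility of $\cB$, no external citation for the constant --- and it exposes the pleasant fact that Lemma \ref{lemAndrews}, which the paper only uses later in Theorem \ref{LastThm}, is exactly the combinatorial content of the constant $\frac{1}{n}(-1)^{n-1}[n-1]_q$; what the paper's route buys is brevity and the group-cohomological interpretation for free. Two small points to fix in your write-up: your ``$G_\sigma=G^0_\sigma$'' must be read with $G_\sigma$ the \emph{pointwise} stabilizer (equivalently the stabilizer in $G^0$), since the setwise stabilizer in $G$ of a chamber contains type-rotating elements and is strictly larger; and the standard building facts you assert ($G^0$ transitive on chambers, $G^0$-orbits of $i$-simplices indexed by $(i+1)$-element type-sets) should carry a reference, e.g.\ to \cite{Brown}.
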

\begin{proof} $G$ has an Euler-Poincar\'e measure $\mu$ in the sense of
\cite{SerreCGD} (see page 140 in \textit{loc. cit.}), in fact $\mu$
is necessarily unique. Let $\G'\lhd \G$ be a normal torsion-free
subgroup of finite index. Then $\G'$ is a cocompact subgroup of $G$,
and hence is of type (FL) in the terminology of \cite{SerreCGD} (see
Theorem 3 on page 121 of \textit{loc. cit.}) Therefore, by the
definition of $\mu$, $\chi(\G')=\Vol(\G'\bs G, \mu/d\delta)$. Since
the geometric realization of $\cB$ is contractible,
$\chi(\G')=\chi(\G'\bs\cB)$ (see Proposition 9 on page 91 in
\textit{loc. cit.}) We conclude that $\chi(\G'\bs\cB)=\Vol(\G'\bs G,
\mu/d\delta)$. Since $\mu$ is a Haar measure, it is proportional to
any other Haar measure $\mu'$ on $G$, i.e., $\mu=c\cdot \mu'$ for
some non-zero constant $c\in \R$. A method for computing this
constant is given on page 140 of \textit{loc. cit.} For the measure
$dh$, Theorem 7 on page 150 of \textit{loc. cit.} gives
$$
\mu = \frac{1}{n}(-1)^{n-1}[n-1]_q dh;
$$
see also \cite[Prop 5.3.9]{LaumonCDV}. Thus,
$$
\chi(\G'\bs\cB)= \frac{1}{n}(-1)^{n-1}[n-1]_q\Vol\left(\G'\bs G,
\frac{dh}{d\delta}\right).
$$
Let $H:=\G'\bs \G$. Then $H$ acts on $\G'\bs\cB$ and
$\G\bs\cB=H\bs(\G'\bs\cB)$. Let $s\in S_i(\G\bs\cB)$. Since $\G'$ is
torsion-free, it is easy to see that $\# H_s = \# \G_s$. Hence by
Lemma \ref{lem2.1}
\begin{align*}
\chi(\G\bs\cB)=& \frac{1}{n}(-1)^{n-1}[n-1]_q\frac{1}{\# H}
\Vol\left(\G'\bs G, \frac{dh}{d\delta}\right)\\ &+\sum_{i=0}^{n-1}
(-1)^i\sum_{s\in S_i(\G\bs\cB)} \left(1-\frac{1}{\# \G_{s}}\right).
\end{align*}
Finally, $\frac{1}{\# H} \Vol\left(\G'\bs G,
\frac{dh}{d\delta}\right)=\Vol\left(\G\bs G,
\frac{dh}{d\delta}\right)$.
\end{proof}


\section{Quotients by arithmetic groups}\label{Sec3}

Let $D$ be a central division algebra over $F$ of dimension $n^2$.
For $x\in |F|$, let $D_x:=D\otimes_F F_x$ and $\inv_x(D)\in \Q/\Z$
be the local invariant of $D$ at $x$; see \cite[Ch. 8]{Reiner} for
the definition. Let $R\subset |F|$ be the set of places ramified in
$D$, i.e., the set of places for which $\inv_x(D)\neq 0$. The
following facts can be found in \cite[$\S$32]{Reiner}:
\begin{enumerate}
\item For any $x\in |F|$ there exists $m_x|n$ such that $m_x\cdot
\inv_x(D)=0$.
\item $n$ is the smallest positive integer such that $n\cdot
\inv_x(D)=0$ for all $x$.
\item $R$ is a finite set and $\sum_{x\in R}\inv_x(D)=0$.
\item $D$ is uniquely determined by its local invariants.
\end{enumerate}
These properties obviously imply that $\# R\geq 2$. Assume $D_x$ is
a division algebra over $F_x$ for each $x\in R$; this is equivalent
to $\inv_x(D)$ having exponent $n$ in $\Q/\Z$. (Later in the section
we will assume that $n$ is prime which makes this condition
automatic.) Note that this assumption, combined with (3), implies
that $\# R$ is even for even $n$. From now on we assume
$\infty\not\in R$.

Let $\La$ be a maximal $A$-order in $D$. Since $D$ satisfies the
Eichler condition \cite[(34.3)]{Reiner} and $A$ is a principal ideal
domain, $\La$ is unique up to conjugation in $D$; see
\cite[(35.14)]{Reiner}. Let $\G:=\La^\times$ be the subgroup of
units of $\La$. This is the subset of $\La$ consisting of those
elements whose reduced norm is in $\F_q^\times$. Let $D^\times$ be
the multiplicative group of $D$. $\G$ acts on $\cB$ via the
embedding
$$\G\hookrightarrow D^\times\hookrightarrow (D\otimes K)^\times\cong
\GL_n(K).$$ Let $\bg$ be the image of $\G$ in $G:=\PGL_n(K)$. Note
that $\F_q^\times$ is in the center of $\G$ and $\bg\cong
\G/\F_q^\times$. $\bg$ is a discrete cocompact subgroup of $G$.
Moreover, $\G$ preserves the types of vertices of $\cB$ since
$\ord(\det(\gamma))=0$ for any $\gamma\in \G$. Hence we can apply
Theorem \ref{thmEP} to compute the Euler-Poincar\'e characteristic
of $\G\bs\cB=\bg\bs \cB$ ($\F_q^\times$ acts trivially on $\cB$),
but first we need to determine the stabilizers of simplices in
$\G\bs\cB$. This will be done in a series of lemmas. For $x\in
|F|-\infty$, denote $\La_x=\La\otimes_{A}\cO_x$.

\begin{lem}\label{BTfix}
Let $H$ be a finite subgroup of $\G$. Then there exists a vertex
$v\in \Ver(\cB)$ such that $H\subset \G_v$.
\end{lem}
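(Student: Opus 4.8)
The plan is to avoid any geometry and instead produce, by a finite averaging argument, a single $\cO$-lattice in $K^n$ that is stabilized by all of $H$; its homothety class will be the desired fixed vertex. The two facts that make this work are that $H$ acts $K$-linearly on $K^n$ through the embedding $\G\hookrightarrow(D\otimes K)^\times\cong\GL_n(K)$ described above, and that $\cO$ is a discrete valuation ring, so that finite sums of lattices are again lattices. Finiteness of $H$ is what keeps the construction inside the world of lattices.

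Concretely, I would start from any lattice, say $L_0=\cO^n$, and form the sum of its $H$-translates
\[
L:=\sum_{h\in H} hL_0 .
\]
Since each $h\in H\subset\GL_n(K)$ is a $K$-linear automorphism, each $hL_0$ is an $\cO$-submodule of $K^n$ isomorphic to $\cO^n$, hence finitely generated; as $H$ is finite, $L$ is a finite sum of finitely generated $\cO$-modules and is therefore finitely generated. Moreover $L\supset L_0=\cO^n$ contains a basis of $K^n$, so $L$ is a lattice in the sense of $\S$\ref{Sec2.2}. The reason for summing over the whole orbit is $H$-invariance: for any $h'\in H$ one has $h'L=\sum_{h\in H}(h'h)L_0=\sum_{h\in H}hL_0=L$, the middle equality being just the reindexing of $H$ by left multiplication. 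Thus $hL=L$ for every $h\in H$, on the nose and not merely up to homothety.

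Letting $v\in\Ver(\cB)$ be the homothety class of $L$, the relations $hL=L$ give $hv=v$ for all $h\in H$, that is, $H\subset\G_v$, which is exactly the assertion. I do not anticipate a real obstacle: the only points needing genuine verification are that $L$ is a bona fide lattice (finite generation, which rests on the finiteness of $H$, together with the fact that $L$ contains a basis) and that the orbit-sum is fixed by $H$ rather than merely permuted—both of which the construction supplies directly. As an alternative, one could invoke the Bruhat--Tits fixed point theorem for the finite group $H$ acting on the CAT$(0)$ geometric realization of $\cB$ to get a fixed point, and then use that $\G$ preserves the types of vertices—so that the distinct-type vertices of the smallest simplex carrying that point are each fixed—to descend to a fixed vertex; but the averaging argument above is cleaner and stays entirely within the lattice description of $\cB$.
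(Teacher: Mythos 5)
Your proposal is correct, but it proves the lemma by a genuinely different route than the paper. The paper's entire proof is a citation of the Bruhat--Tits fixed point theorem (the theorem on p.~161 of \cite{Brown}): a finite group acting by isometries on the CAT$(0)$ geometric realization of $\cB$ has a fixed point, and one then descends to a fixed vertex essentially by the type-preservation remark you sketch in your closing alternative. Your main argument instead stays inside the lattice model of $\cB$: the orbit-sum $L=\sum_{h\in H}hL_0$ is finitely generated (finitely many summands, each a finitely generated $\cO$-module), contains the basis $L_0=\cO^n$ because $1\in H$, and is honestly $H$-stable by reindexing, so its homothety class is a vertex fixed by all of $H$. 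All the verifications you flag go through — in particular, over the DVR $\cO$ a finitely generated submodule of $K^n$ containing a basis is exactly a lattice in the paper's sense, and the sum-of-translates trick needs no invertibility of $\#H$, unlike a true ``averaging'' argument. What each approach buys: yours is elementary, self-contained, and avoids any nonpositive-curvature geometry, but it is special to $\GL_n$ (it uses the concrete description of vertices as lattice classes); the paper's citation is a one-line appeal to a much more general principle that works for any group acting with bounded orbits on any building, at the cost of importing the CAT$(0)$ machinery and the small extra step of passing from a fixed point to a fixed vertex.
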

\begin{proof} This is a consequence of the Bruhat-Tits fixed point
theorem; see the theorem on page 161 in \cite{Brown}.
\end{proof}

\begin{lem}
Let $H$ be a finite subgroup of $\G$. Then $H$ is isomorphic to a
subgroup of $\GL_n(\F_q)$ of order coprime to $p$.
\end{lem}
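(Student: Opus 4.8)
The plan is to combine the previous lemma with a reduction-modulo-$\pi$ argument. By Lemma~\ref{BTfix}, the finite subgroup $H$ fixes some vertex $v\in\Ver(\cB)$, so $H\subset\G_v$. Choosing a lattice $L$ representing $v$, each element of $H$ normalizes $L$ up to scaling by $K^\times$; since $\G$ preserves types and the elements of $\G$ have determinant of valuation $0$, I would first argue that in fact $HL=L$ on the nose (the scaling factor must be a unit, and after adjusting by $\F_q^\times\subset\G$ one may assume it is trivial). This realizes $H$ as a subgroup of the automorphisms of the lattice $L$, i.e.\ $H\hookrightarrow\GL(L)\cong\GL_n(\cO)$.

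Next I would pass to the residue field. Reduction modulo $\pi$ gives a homomorphism
$$
\rho\colon \GL_n(\cO)\To \GL_n(\cO/\pi\cO)=\GL_n(\F_q),
$$
and I claim its restriction to $H$ is injective. The kernel of $\rho$ is the principal congruence subgroup $1+\pi\M_n(\cO)$, which is a pro-$p$ group (here $p$ is the characteristic of $\F_q$), hence torsion-free up to $p$-power order; more precisely it has no nontrivial elements of order coprime to $p$, and being a finitely-generated pro-$p$ group localized at $\pi$, its only torsion is $p$-power torsion. Since $H$ is finite, $H\cap\ker\rho$ is a finite $p$-group. To conclude injectivity I must rule out $p$-torsion in $H$ altogether.

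The key input for ruling out $p$-torsion is that $D$ is a \emph{division} algebra: an element $h\in\G$ of order $p$ would satisfy $h^p=1$, so $(h-1)^p=0$ in characteristic $p$, forcing $h-1$ to be a nilpotent element of the division ring $D$, whence $h=1$. Thus $\G$, and in particular $H$, has no elements of order $p$, so $H\cap\ker\rho$ is trivial and $\rho|_H$ is injective. This exhibits $H$ as a subgroup of $\GL_n(\F_q)$, and since $H\cap(1+\pi\M_n(\cO))=1$ while $1+\pi\M_n(\cO)$ accounts for all $p$-power torsion, the order of $H$ is automatically coprime to $p$ (equivalently, $\#\GL_n(\F_q)=q^{n(n-1)/2}\prod_{i=1}^n(q^i-1)$ has $p$-part $q^{n(n-1)/2}$, and the injective image of $H$ meets no $p$-Sylow nontrivially because $H$ has no $p$-torsion).

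The main obstacle I anticipate is the very first step: showing that fixing the \emph{class} $v=[L]\in\bar\cL$ can be upgraded to fixing the lattice $L$ itself, so that $H$ lands in $\GL_n(\cO)$ rather than merely normalizing $[L]$. This requires using that elements of $\G$ have reduced norm in $\F_q^\times$ (hence $\ord(\det)=0$) together with the type-preservation already established in the text, to force the scaling ambiguity to lie in $\cO^\times$ and then to be absorbed; the cleanest route is probably to observe that $H$ permutes the finitely many lattices in the class $v$ that it stabilizes and to replace $L$ by a common $H$-stable representative. The subsequent pro-$p$ and nilpotency arguments are then routine.
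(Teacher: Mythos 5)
Your proof is correct and is essentially the paper's own argument: both use the Bruhat--Tits fixed point lemma to place $H$ in a vertex stabilizer, realize $H$ inside $\GL_n(\cO)$, rule out $p$-torsion via $(h-1)^p = h^p - 1 = 0$ in the division algebra $D$, and then note that the kernel of the reduction $\GL_n(\cO)\to\GL_n(\F_q)$ has only $p$-power torsion, so $H$ injects into $\GL_n(\F_q)$ with order coprime to $p$ (the paper just runs the torsion argument first and the embedding second). The step you flag as the ``main obstacle'' is exactly what the paper dispatches in one line via the identification of the stabilizer of $v$ in $\GL_n(K)$ with $K^\times\GL_n(\cO)$, and your valuation argument is the right justification --- indeed once $\ord(\det h)=0$ forces the scalar $\lambda$ into $\cO^\times$, one has $\lambda L = L$ automatically, so no adjustment by $\F_q^\times$ or choice of a special representative is even needed.
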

\begin{proof}
First, we show that every element of $H$ has order coprime to $p$.
Let $g\in H$ be of order $m$. Suppose $p|m$. Replacing $g$ by
$g^{m/p}$, we may assume that $g$ has order $p$. Now $g^p=1$ implies
$(g-1)^p=0$ in $D$. Since $g\neq 1$, this leads to a contradiction,
as $D$ is a division algebra. By Cauchy's theorem, $p$ is coprime to
$\# H$. Next, by Lemma \ref{BTfix}, we know that $H\subset \G_v$ for
some vertex $v\in \cB$. The stabilizer of $v$ in $\GL_n(K)$ is
isomorphic to $K^\times \GL_n(\cO)$. Hence $H$ is isomorphic to a
subgroup of $\GL_n(\cO)$. Consider the reduction map $\GL_n(\cO)\to
\GL_n(\F_q)$. It is well-known that the kernel of this homomorphism
contains torsion elements only of order a power of $p$, so $H$ maps
isomorphically to a subgroup of $\GL_n(\F_q)$.
\end{proof}

\begin{lem}\label{lem4.3}\hfill
\begin{enumerate}
\item Every finite subgroup of $\G$ is contained in a maximal finite
subgroup.
\item A maximal finite subgroup of $\G$ is isomorphic to
$\F_{q^d}^\times$ for some $d|n$. Moreover, if $D^\times$ contains a
subgroup isomorphic to $\F_{q^n}^\times$, then every maximal finite
subgroup of $\G$ is isomorphic to $\F_{q^n}^\times$.
\item The stabilizer in $\G$ of a simplex of $\cB$ is isomorphic to $\F_{q^d}^\times$ for some $d|n$.
\end{enumerate}
\end{lem}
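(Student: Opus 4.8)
The whole argument rests on one observation, which I would establish first for an \emph{arbitrary} finite subgroup $H\subseteq\G$: the $\F_q$-subalgebra $\F_q[H]\subseteq D$ generated by $H$ is a finite field $\F_{q^d}$ with $d\mid n$, and $\F_{q^d}^\times\subseteq\G$. For finiteness, note that $\F_q[H]$ is spanned over the finite field $\F_q$ by the finite set $H$, hence is a finite ring; as a subring of the division algebra $D$ it has no zero divisors, and a finite domain is a division ring, hence a field by Wedderburn's theorem. Writing $\F_q[H]=\F_{q^d}$, we see $H\subseteq\F_{q^d}^\times$ is cyclic. Since $\F_q\subseteq F$ lies in the center of $D$, the compositum $F\cdot\F_{q^d}\cong\F_{q^d}(T)$ is a commutative subfield of $D$ of degree $d$ over $F$; as every subfield of the degree-$n$ division algebra $D$ has degree dividing $n$, we get $d\mid n$. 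Finally $H\subseteq\La$ gives $\F_q[H]\subseteq A[H]\subseteq\La$, and each $\alpha\in\F_{q^d}^\times$ is a root of unity, so $\Nr_{D/F}(\alpha)$ is a root of unity in $F^\times$, hence lies in $\F_q^\times$; therefore $\F_{q^d}^\times\subseteq\La^\times=\G$.

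Parts (1), the first assertion of (2), and (3) then follow quickly. For (1), every finite subgroup has order at most $q^n-1$, so starting from any finite $H$ and repeatedly enlarging (each step strictly increasing the order) must terminate in a maximal finite subgroup. For (2), if $M$ is maximal finite then $M\subseteq\F_q[M]^\times=\F_{q^d}^\times\subseteq\G$, and maximality forces $M=\F_{q^d}^\times$ with $d\mid n$. For (3), by \eqref{eq-star} the stabilizer satisfies $\G_s=\bigcap_{v\in s}\G_v$. For a vertex $v=[L_v]$ and $\gamma\in\G_v$ one has $\gamma L_v=xL_v$ with $x\in K^\times$; since $\ord(\det(\gamma))=0$, comparing covolumes gives $n\,\ord(x)=0$, so $x\in\cO^\times$ and $\gamma L_v=L_v$. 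Hence every element of $\F_q[\G_s]=\F_{q^d}$ maps each $L_v$ into itself, so $\F_{q^d}^\times$ stabilizes each $L_v$ and fixes $s$; thus $\F_{q^d}^\times\subseteq\G_s\subseteq\F_{q^d}^\times$, giving $\G_s\cong\F_{q^d}^\times$.

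For the \emph{moreover} statement, observe first that a subgroup of $D^\times$ isomorphic to $\F_{q^n}^\times$ generates, by the argument above, a finite field containing an element of order $q^n-1$, hence a copy of $\F_{q^n}$ in $D$. Let $M=\F_{q^d}^\times$ be maximal finite; I must show $d=n$. The plan is to extend the field $\F_{q^d}\subseteq\La$ to a copy of $\F_{q^n}$ still contained in $\La$, which by maximality of $M$ forces $d=n$. I would do this place by place using $\La_x=\La\otimes_A\cO_x$: for $x\notin R$ one has $\La_x\cong\M_n(\cO_x)$, and since in characteristic $p$ the ring $\cO_x$ contains its residue field, $\La_x$ contains $\F_{q^{n\deg(x)}}\supseteq\F_{q^n}$; for $x\in R$, $D_x$ is division of index $n$ and $\La_x$ is its valuation ring, whose residue ring is the field $\F_{q^{n\deg(x)}}$, again containing $\F_{q^n}$ after lifting. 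In each case local Skolem--Noether lets one choose the local copy of $\F_{q^n}$ to contain the given $\F_{q^d}$.

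The crux, which I expect to be the main obstacle, is passing from these local extensions to a single global copy of $\F_{q^n}$ inside $\La$ that contains $\F_{q^d}$: this is exactly an existence statement for an (optimal) embedding of $\F_{q^n}[T]$ into $\La$ refining the given embedding of $\F_{q^d}[T]$, and it requires the local--global theory of embeddings of orders, the same circle of ideas (Eichler's formula) used elsewhere in the paper. Concretely, the hypothesis $\F_{q^n}\hookrightarrow D$ is equivalent to $\gcd(n,\deg(x))=1$ for every $x\in R$, and this is precisely the condition guaranteeing that the required local embeddings exist at the ramified places; granting the local--global principle, one obtains the global embedding and the desired contradiction with $d<n$. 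I expect the algebra of the first two paragraphs to be routine, with all the real work concentrated in this gluing step.
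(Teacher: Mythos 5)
Your first two paragraphs are correct, and they even improve on the paper's argument in one respect: you get the field structure of $\F_q[H]$ for an \emph{arbitrary} finite subgroup $H\subset\G$ in one stroke (the $\F_q$-span of the group $H$ is a finite ring without zero divisors, hence a finite field by Wedderburn's little theorem), whereas the paper first proves commutativity of $H$ by localizing at a ramified place $x\in R$ and embedding $H$ into $(\F_x^{(n)})^\times$ via $\La_x/\Pi_x$, and only then forms $\F_q[H]$. Your part (3) is the same mechanism as the paper's, phrased with lattices and determinants instead of parahoric subgroups; both come down to $\G_s\subseteq\F_q[\G_s]^\times\subseteq\G_s$. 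Two points you should make explicit: $\G_s$ is finite (noted in $\S$\ref{Sec2.3}; you need this to apply your first paragraph to $H=\G_s$), and $\G$ satisfies \eqref{eq-star} because it preserves types, which is what justifies $\G_s=\bigcap_{v\in s}\G_v$.

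The genuine gap is the ``moreover'' claim, and you flag it yourself: ``granting the local--global principle'' is precisely the missing proof. Note carefully what is needed: not the existence of \emph{some} copy of $\F_{q^n}$ in $\La$ --- that would follow from Lemmas \ref{lem-nR}, \ref{lem-R} and Theorem \ref{thm-EchEmb} (no circularity; their proofs do not use Lemma \ref{lem4.3}) --- but a copy \emph{containing} the given field $\F_q[M]\cong\F_{q^d}$. Without that containment you cannot contradict the maximality of $M$, since distinct maximal finite subgroups need not be conjugate (Proposition \ref{propCojGrp} exhibits $n^{\# R-1}$ conjugacy classes of them). And the relative statement you need is not available off the shelf: Theorem \ref{thm-EchEmb} counts optimal embeddings of $B$ into $\La$ with no compatibility condition on a prescribed embedded subfield. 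The natural home for the relative problem is the centralizer $C=Z_D\bigl(\F_{q^d}(T)\bigr)$, a central division algebra over $\F_{q^d}(T)$, where one would want to embed $\F_{q^n}$ into the order $\La\cap C$; but $\La\cap C$ need not be a maximal order in $C$, so the standard local--global embedding theory does not apply verbatim. Making this step rigorous is the actual content of the ``moreover'' statement, and your proposal leaves it open.

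For comparison, the paper's proof of this part is global and avoids embedding numbers entirely: since $d\mid n$, Skolem--Noether (any two embeddings of $L=\F_{q^d}(T)$ into $D$ are conjugate) places the given copy of $L$ inside a copy of $L'=\F_{q^n}(T)$ in $D$; a generator $\tau$ of the constants of $L'$ then has a power generating $M$, the paper concludes $\tau\in\G$, and maximality forces $M\cong\F_{q^n}^\times$. The deduction $\tau\in\G$ is exactly where the paper absorbs the difficulty you isolated (why a root of unity whose power lies in $\La$ can be taken to lie in $\La$ itself), so you identified the crux correctly; but identifying it is not resolving it, and as written your proposal proves (1), (3) and the first half of (2), not the full lemma.
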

\begin{proof}
(1) Each sequence of finite subgroups of $\G$ ordered by inclusion
contains a maximal element since all such subgroups are isomorphic
to subgroups of $\GL_n(\F_q)$.

(2) Let $H$ be a maximal finite subgroup of $\G$. We can consider
$H$ as a finite subgroup of $\La_x^\times$, $x\in |F|-\infty$. In
particular, let $x\in R$. Then by assumption $D_x$ is a central
division algebra over $F_x$, so $\La_x$ is the unique
$\cO_x$-maximal order in $D_x$. The structure of $\La_x$ is
well-known: $\La_x$ has a unique two-sided maximal ideal $\Pi_x$ and
$\La_x/\Pi_x\cong \F_x^{(n)}$, where $\F_x^{(n)}$ denotes the
degree-$n$ extension of $\F_x$; see \cite[Ch. 3]{Reiner}. Hence a
finite subgroup of $\La_x^\times$ is isomorphic to a subgroup of
$(\F_x^{(n)})^\times$. In particular, $H$ is commutative. Since $H$
is finite and commutative, the subring $\F_q[H]$ of $\La$ generated
over $\F_q$ by $H$ is finite. On the other hand, $D$ is a division
algebra. Hence $\F_q[H]$ is a finite field extension $\F$ of $\F_q$.
Obviously, $\F^\times\subset \La^\times=\G$, thus $H=\F^\times$ by
maximality. Now $L:=\F F$ is an $F$-subfield of $D$ and $[L:
F]=[\F:\F_q]$. This implies that $d:=[\F:\F_q]$ divides $n$ (see
\cite[Cor. A.3.4, p. 255]{LaumonCDV}), and $H\cong \F_{q^d}^\times$.

Let $L':=\F_{q^n}F$. Assume $L'$ embeds into $D$. Any two embeddings
of $L$ into $D$ are conjugate; cf. \cite[Cor. A.3.4, p.
255]{LaumonCDV}. Hence any subfield of $D$ isomorphic to $L$ is
contained in a field isomorphic to $L'$, and $L\cap \La\subset
L'\cap \La$. Suppose $\tau\in L'$ generates $\F_{q^n}^\times$. Then
some power of $\tau$ generates $H$, so $\tau\in \G$. This implies
that $H$ is contained in a subgroup in $\G$ isomorphic to
$\F_{q^n}^\times$. By maximality, $H\cong \F_{q^n}^\times$.

(3) The proof of this part is similar to (2). Let $s$ be a simplex
of $\cB$ and $H=\G_s$. The stabilizer of $s$ in $\GL_n(K)$ is
$K^\times \cdot P$ for some parahoric subgroup $P$, so $H=\G\cap P$;
see \cite[VI.5]{Brown}. Let $\F^\times:=\F_q[H]^\times$. As we saw
in (2), $\F$ is a field and $\F^\times\cong \F_{q^d}^\times$ for
some $d|n$. The lattices in $K^n$ forming the flag corresponding to
$s$ are clearly mapped to themselves under the action of $\F$ (as a
subring of $\M_n(K)$). Since the corresponding elements are
invertible, they lie in $P$. Hence $H\subset \F^\times\subset \G\cap
P=H$. This implies that $\F^\times=H$.
\end{proof}

\begin{lem}\label{lemUV}
Let $H\subset \G$ be a finite subgroup isomorphic to
$\F_{q^n}^\times$. Then $H$ fixes a unique vertex of $\cB$.
\end{lem}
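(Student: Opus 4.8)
The plan is to realize $H$ concretely inside a maximal subfield of $\M_n(K)$ and then reduce the fixed-vertex question to the classification of fractional ideals of a discrete valuation ring.

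First I would record the algebraic setup. Writing $\F := \F_q[H]$, Lemma \ref{lem4.3} shows that $\F$ is a field, and since $H\cong \F_{q^n}^\times$ we have $\F\cong \F_{q^n}$; thus $L := \F\cdot F \cong \F_{q^n}(T)$ is a maximal subfield of $D$ with $[L:F]=n$. Because $\infty\notin R$ we have $D\otimes_F K\cong \M_n(K)$, and I would check that $L_\infty := L\otimes_F K\cong \F_{q^n}\otimes_{\F_q} K$ is a field — the unramified extension of $K$ of degree $n$ — using that $\F_q$ is the field of constants of $K$, so that $\infty$ stays prime in $L$. Consequently the embedding $L_\infty\hookrightarrow \M_n(K)$ exhibits $L_\infty$ as a maximal subfield, and $K^n$ becomes a one-dimensional $L_\infty$-vector space; identifying $K^n = L_\infty$, a generator $\tau$ of $H$ acts by multiplication by an element of $\F^\times\subset \cO_{L_\infty}^\times$. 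Existence of a fixed vertex is then immediate: $\cO_{L_\infty}$ is an $\cO$-lattice in $K^n$ with $\tau\cO_{L_\infty}=\cO_{L_\infty}$, so its class is $H$-fixed (this also follows from Lemma \ref{BTfix}).

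For uniqueness I would begin with an arbitrary lattice $M$ whose class is $H$-fixed, so $\tau M=\lambda M$ for some $\lambda\in K^\times$. Iterating and using $\tau^{q^n-1}=1$ gives $\lambda^{q^n-1}M=M$, whence $\lambda^{q^n-1}\in \cO^\times$ and therefore $\ord(\lambda)=0$, i.e. $\lambda\in \cO^\times$. Setting $\tau':=\lambda^{-1}\tau$, which still lies in $\cO_{L_\infty}^\times$ and still generates $L_\infty$ over $K$, we obtain $\tau'M=M$, so $M$ is a module over the order $\cO[\tau']$. The crux is to identify $\cO[\tau']=\cO_{L_\infty}$: reducing modulo $\pi$, the image $\bar{\tau'}=\bar\lambda^{-1}\tau$ differs from the primitive element $\tau$ of $\F_{q^n}/\F_q$ only by the scalar $\bar\lambda^{-1}\in\F_q^\times$, hence is itself primitive, so $1,\tau',\dots,(\tau')^{n-1}$ reduce to an $\F_q$-basis of $\F_{q^n}$; Nakayama's lemma then forces them to be an $\cO$-basis of $\cO_{L_\infty}$.

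Finally, since $\cO_{L_\infty}$ is a discrete valuation ring, the finitely generated torsion-free rank-one module $M$ is free, i.e. a fractional ideal, so $M=\pi^k\cO_{L_\infty}$ for some $k\in\Z$, as $\pi$ remains a uniformizer of the unramified extension $L_\infty$. Since $\pi\in K^\times$, the classes of $\pi^k\cO_{L_\infty}$ and $\cO_{L_\infty}$ coincide in $\Ver(\cB)$, so $[\cO_{L_\infty}]$ is the only $H$-fixed vertex. The step I expect to be the main obstacle is the identification $\cO[\tau']=\cO_{L_\infty}$: one must handle the twist by $\lambda$ carefully — it is precisely here that the maximality $d=n$ of the subfield, rather than merely $d\mid n$, is used (for $d<n$ the centralizer of $L_\infty$ is larger and the fixed set is a whole subbuilding) — and verify that multiplying the primitive element by a scalar from $\F_q^\times$ preserves primitivity before invoking Nakayama.
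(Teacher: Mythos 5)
Your proof is correct, and it takes a genuinely different route from the paper's. The paper argues geometrically: existence comes from the Bruhat--Tits fixed point theorem, and for uniqueness it connects two putative fixed vertices by a geodesic in the Euclidean building, producing a fixed simplex of positive dimension, hence an embedding of $H$ into a proper parahoric subgroup of $\GL_n(\cO)$; reducing mod $\pi$ then forces the generator of $\F_{q^n}^\times$ to land in a proper parabolic of $\GL_n(\F_q)$, contradicting irreducibility of its characteristic polynomial. You instead work module-theoretically: identifying $K^n$ with the unramified extension $L_\infty$ of degree $n$ (valid since $\deg(\infty)=1$ forces $\infty$ to stay inert in the constant field extension), you show any $H$-fixed lattice class is a fractional ideal of the discrete valuation ring $\cO_{L_\infty}$, hence of the form $\pi^k\cO_{L_\infty}$, so there is exactly one fixed vertex, namely $[\cO_{L_\infty}]$. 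The two proofs hinge on the same arithmetic fact---a generator of $\F_{q^n}$ over $\F_q$ stays primitive after reduction mod $\pi$ and after twisting by a scalar in $\F_q^\times$ (in the paper's version: irreducibility of $f_\xi$ mod $\pi$)---but package it differently. The paper's version is shorter given the building-theoretic machinery from Brown already in use elsewhere (parahoric stabilizers, CAT(0) geodesics), whereas yours is more elementary and self-contained (Nakayama plus the structure of DVRs), pinpoints the fixed vertex explicitly, gets existence for free without the fixed point theorem, and your closing remark correctly isolates where maximality $d=n$ enters: for $d<n$ the lattice $M$ would be free of rank $n/d>1$ over the smaller unramified extension, and the fixed set would indeed be a positive-dimensional subbuilding rather than a point. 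The only step worth writing out more carefully is the passage from ``$[M]$ is $H$-fixed'' to ``$\tau M=\lambda M$'': this is just the definition of the action on $\Ver(\cB)=\cL/K^\times$ together with cyclicity of $H$, and your subsequent normalization $\ord(\lambda)=0$ via $\tau^{q^n-1}=1$ is exactly right.
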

\begin{proof}
By Lemma \ref{BTfix}, $H$ fixes a vertex $v$ in $\cB$. Assume there
is another vertex $w\neq v$ fixed by $H$. Let $B$ be the Euclidean
building associated to $\cB$; see \cite[Ch. VI]{Brown} for the
definition. Let $[v,w]$ be the line segment in $B$ joining $v$ and
$w$; this is well-defined by part (4) of the theorem on page 152 in
\cite{Brown}. $H$ fixes $[v,w]$ pointwise since this is a group of
isometries fixing the endpoints. The union of closed simplices
containing $v$ is a neighborhood of $v$ in $B$. Therefore, there is
a unique simplex $s\in \cB$ of minimal positive dimension which
contains $v$ and intersects $[v,w]$. This simplex must be fixed by
$H$. This implies that $H$ is contained in a proper parahoric
subgroup of $\GL_n(K)$, cf. the proof of Lemma \ref{lem4.3}. After
conjugation, we can assume that $H$ is contained in a standard
non-maximal parahoric subgroup $P$ of $\GL_n(\cO)$. Let $\xi$ be a
generator of $\F_{q^n}^\times$. Consider $\xi$ as a matrix in
$\GL_n(\cO)$ and let $f_\xi$ be its characteristic polynomial. Then
$f_\xi$ coincides with the minimal polynomial of $\xi$ over $\F_q$,
so it is irreducible. The image of $\xi$ in $\GL_n(\F_\infty)$ under
the reduction map $\GL_n(\cO)\to \GL_n(\F_\infty)$ has
characteristic polynomial $f_\xi\ (\mod\ \pi_\infty)=f_\xi$, which
is still irreducible since $\F_\infty\cong \F_q$. On the other hand,
the image of $P$ in $\GL_n(\F_\infty)$ is a proper parabolic
subgroup, so the characteristic polynomials of its elements are
reducible. This leads to a contradiction.
\end{proof}

\begin{notn} Let $x\in |F|$ and let $m\geq 1$ be a positive integer.
Denote
$$
\wp(x,m)= \left\{
  \begin{array}{ll}
    0, & \hbox{if $\textrm{gcd}(m,\deg(x))> 1$;} \\
    1, & \hbox{otherwise.}
  \end{array}
\right.
$$
Let $\wp(R, m):=\prod_{x\in R}\wp(x,m)$.
\end{notn}

\begin{lem}\label{lem4.6}
$\F_{q^m}(T)$ embeds into $D$ if and only if $m|n$ and $\wp(R,m)=1$.
\end{lem}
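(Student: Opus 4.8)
The plan is to reduce the statement to the standard criterion for a field to embed into a central division algebra and then to compute the relevant local data for the constant field extension $L := \F_{q^m}(T)$ of $F$. Note first that $L/F$ is a cyclic, hence separable, extension of degree $m$: since $\F_q$ is algebraically closed in $F$, the minimal polynomial over $\F_q$ of a generator of $\F_{q^m}$ stays irreducible over $F$, so $L = \F_{q^m}\otimes_{\F_q}F$ is a field and $\Gal(L/F)\cong\Gal(\F_{q^m}/\F_q)$. I would then invoke the embedding criterion: for a separable extension $L/F$ of degree $m$, the field $L$ is $F$-isomorphic to a subfield of $D$ if and only if $m\mid n$ and the index of $D\otimes_F L$ equals $n/m$. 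Necessity of these two conditions follows from the double centralizer theorem, since an embedding $L\hookrightarrow D$ makes the centralizer $C_D(L)$ a division algebra of degree $n/m$ over $L$ that is Brauer-equivalent to $D\otimes_F L$; the necessity of $m\mid n$ was already recorded in the proof of Lemma \ref{lem4.3}. Sufficiency is a standard fact about central simple algebras (see \cite{Reiner}).

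It then remains to decide when $\mathrm{ind}(D\otimes_F L)=n/m$, and for this the first step is to determine how each place $x\in|F|$ splits in $L$. Because $L\otimes_F F_x=\F_{q^m}\otimes_{\F_q}F_x$ and the residue field of $F_x$ is $\F_{q^{\deg(x)}}$, the étale algebra $\F_{q^m}\otimes_{\F_q}\F_{q^{\deg(x)}}$ decomposes into $\gcd(m,\deg(x))$ copies of $\F_{q^{\mathrm{lcm}(m,\deg(x))}}$. Hence there are exactly $\gcd(m,\deg(x))$ places $w$ of $L$ above $x$, each with local degree $[L_w:F_x]=m/\gcd(m,\deg(x))$. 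Combining this with the base-change formula for local invariants, $\inv_w(D\otimes_F L)=[L_w:F_x]\cdot\inv_x(D)$, and recalling that $\inv_x(D)$ has exact order $n$ for $x\in R$ and order $1$ otherwise, the order of $\inv_w(D\otimes_F L)$ at a place $w$ above $x\in R$ is $n/\gcd\!\big(n,\,m/\gcd(m,\deg(x))\big)$. Since $m\mid n$ forces $m/\gcd(m,\deg(x))$ to divide $n$, this simplifies to $n\gcd(m,\deg(x))/m$.

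The index of $D\otimes_F L$ is the least common multiple over all places $w$ of $L$ of these orders, and as the places over $x\notin R$ contribute $1$ while all places over a fixed $x\in R$ give the same value, I obtain
$$
\mathrm{ind}(D\otimes_F L)=\frac{n}{m}\,\mathrm{lcm}_{x\in R}\gcd(m,\deg(x)).
$$
Thus $\mathrm{ind}(D\otimes_F L)=n/m$ if and only if $\gcd(m,\deg(x))=1$ for every $x\in R$, which is exactly the condition $\wp(R,m)=1$. Together with the embedding criterion this gives the lemma. The step I expect to require the most care is the embedding criterion in the ``if'' direction --- converting the equality of indices into an actual subfield of $D$ --- together with the local invariant bookkeeping, where one must apply the base-change formula correctly at each ramified place and use the standing hypothesis that $\inv_x(D)$ has exact order $n$ for all $x\in R$.
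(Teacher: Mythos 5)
Your proof is correct, and its skeleton is the same as the paper's: reduce to a subfield-embedding criterion for the division algebra $D$, then compute how places of $F$ behave in the constant field extension $\F_{q^m}(T)/F$ (everywhere unramified, with $x$ splitting into $\gcd(m,\deg(x))$ places). The difference is how much of that criterion you prove rather than cite. The paper invokes \cite[Cor.~A.3.4, p.~255]{LaumonCDV}, which --- given the standing hypothesis that $D_x$ is a division algebra for every $x\in R$ --- already says: a finite extension $K/F$ embeds into $D$ if and only if $[K:F]$ divides $n$ and no place of $R$ splits in $K$; after that citation the entire proof is the splitting computation. You instead start from the general index-theoretic criterion ($L$ embeds iff $m\mid n$ and $\mathrm{ind}(D\otimes_F L)=n/m$) and compute $\mathrm{ind}(D\otimes_F L)$ yourself, via the base-change formula for local invariants, the hypothesis that $\inv_x(D)$ has exact order $n$ at $x\in R$, and the Albert--Brauer--Hasse--Noether fact that over a global field the index equals the least common multiple of the orders of the local invariants. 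Your bookkeeping is right (in particular the simplification $\gcd\bigl(n,\,m/\gcd(m,\deg(x))\bigr)=m/\gcd(m,\deg(x))$ once $m\mid n$, and pulling the common factor $n/m$ out of the lcm), so in effect you have re-proved the special case of Laumon's corollary that the paper quotes. Your route is longer but more self-contained, and it makes explicit exactly where the standing assumption on the exact order of the invariants at $R$ enters; the paper's route trades that transparency for brevity.
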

\begin{proof}
Let $K/F$ be a finite field extension. Then $K$ embeds into $D$ as
an $F$-subalgebra if and only if $[K:F]$ divides $n$ and none of the
places in $R$ split in $K$; see \cite[Cor. A.3.4, p.
255]{LaumonCDV}. There are no ramified places in the extension
$\F_{q^m}F/F$. Moreover, a place $x\in |F|$ splits into
$\mathrm{gcd}(\deg(x),m)$ places in $\F_{q^m}(T)$.
\end{proof}

\begin{defn} Let $L:=\F_{q^n}(T)$ and $B:=\F_{q^n}[T]$; $B$ is the
integral closure of $A$ in $L$. Suppose there exists an embedding
$\phi: L\hookrightarrow D$. We say that $\phi$ is an \textit{optimal
embedding with respect to $\La/B$} if $\phi(L)\cap \La= \phi(B)$,
cf. \cite[p. 26]{Vigneras}; for simplicity, we say \textit{$B$ is
optimally embedded in $\La$}. If $\phi$ exist, then it is known that
$B$ is optimally embedded in at least one maximal $A$-order in $D$,
cf. \cite[p. 384]{DvG}. Since all such maximal $A$-orders are
conjugate in $D$, we can assume that $B$ is optimally embedded in
$\La$. Two optimal embeddings $\phi_1$ and $\phi_2$ of $B$ into
$\La$ are said to be \textit{equivalent modulo $\G$} if
$\phi_2=\gamma\phi_1\gamma^{-1}$ for some $\gamma\in \G$. Denote the
number of optimal embeddings of $B$ into $\La$, which are
non-equivalent modulo $\G$, by $m(B)$. Similarly, for $x\in
|F|-\infty$, denote by $m_x(B_x)$ the number of optimal embeddings
of $B_x:=B\otimes_A \cO_x$ into $\La_x$ which are not equivalent
modulo $\La_x^\times$.
\end{defn}

\begin{thm}[Eichler]\label{thm-EchEmb}
$$m(B)=\prod_{x\in |F|-\infty}m_x(B_x).$$
\end{thm}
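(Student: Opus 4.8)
The plan is to establish this local--global identity adelically, reducing both $m(B)$ and the product $\prod_x m_x(B_x)$ to the cardinalities of a global and a restricted-product double-coset space and then exhibiting a bijection between them. Write $\widehat A=\prod_{x\neq\infty}\cO_x$ and, for an $A$-module $M$, set $\widehat M=M\otimes_A\widehat A$; thus $\widehat D$, $\widehat\La$, $\widehat B$ are the evident restricted products over the finite places, and we also put $\widehat L=L\otimes_F\widehat F$ with $\widehat F$ the finite adeles of $F$. Then $\G=\La^\times=D^\times\cap\widehat\La^\times$. Fix one optimal embedding $\psi\colon B\hookrightarrow\La$, which exists by hypothesis. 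Since $L$ is a maximal subfield of $D$ its centralizer in $D$ is $\psi(L)$ itself, and by the Skolem--Noether theorem every embedding $L\hookrightarrow D$ is $D^\times$-conjugate to $\psi$ (and likewise at each finite place). Hence $g\mapsto g\psi g^{-1}$ identifies the optimal embeddings of $B$ into $\La$ modulo $\G$ with the double cosets $\La^\times\backslash X/L^\times$, where
$$
X=\{\,g\in D^\times:\ g\psi(B)g^{-1}\subseteq\La\ \text{ and }\ g\psi(L)g^{-1}\cap\La\subseteq g\psi(B)g^{-1}\,\},
$$
and the same recipe place by place identifies $m_x(B_x)$ with $\#\bigl(\La_x^\times\backslash X_x/L_x^\times\bigr)$.

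The optimality conditions cutting out $X$ are each imposed one finite place at a time, because $\La=D\cap\widehat\La$; consequently $X=D^\times\cap\widehat X$ with $\widehat X=\prod_{x\neq\infty}X_x$. At every place where $D_x$ is split and $B_x$ is maximal---all but finitely many---one checks $m_x(B_x)=1$, so the restricted product is meaningful and $\prod_x m_x(B_x)=\#\bigl(\widehat\La^\times\backslash\widehat X/\widehat L^\times\bigr)$. The theorem thus reduces to showing that the localization map
$$
\La^\times\backslash X/L^\times\ \longrightarrow\ \widehat\La^\times\backslash\widehat X/\widehat L^\times
$$
is a bijection.

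For surjectivity I would invoke class number one for $\La$: since $D$ satisfies the Eichler condition and $A$ is a principal ideal domain, $\widehat D^\times=D^\times\widehat\La^\times$, and hence also $\widehat D^\times=\widehat\La^\times D^\times$ as both factors are groups. Given $\widehat g\in\widehat X$, write $\widehat g=u\gamma$ with $u\in\widehat\La^\times$ and $\gamma\in D^\times$; because $\widehat X$ is stable under left multiplication by $\widehat\La^\times$, the element $\gamma=u^{-1}\widehat g$ lies in $D^\times\cap\widehat X=X$ and represents the class of $\widehat g$. For injectivity, suppose $g_1,g_2\in X$ become equal in the adelic double cosets. Then $a:=g_2g_1^{-1}\in D^\times$ conjugates $g_1\psi g_1^{-1}$ to $g_2\psi g_2^{-1}$, while the adelic equality realizes the same conjugation through an element of $\widehat\La^\times$; comparing the two shows $a\in\widehat\La^\times\cdot\widehat\psi\bigl(\widehat L^\times\bigr)$. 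Replacing $a$ on the right by $\psi(\ell_0)$ for suitable $\ell_0\in L^\times$ leaves the conjugation unchanged, and by the adelic optimality relation $\widehat\psi(\widehat L)^\times\cap\widehat\La^\times=\widehat\psi\bigl(\widehat B^\times\bigr)$ the requirement that the adjusted element lie in $\G$ becomes the demand that one specific idele of $L$ be trivial in $\widehat L^\times/\bigl(L^\times\widehat B^\times\bigr)$.

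The crux---and the step I expect to be the main obstacle---is this last cancellation, which is precisely where the arithmetic of $B$ intervenes: the quotient $\widehat L^\times/\bigl(L^\times\widehat B^\times\bigr)$ is $\Pic(B)$, and since $B=\F_{q^n}[T]$ is a polynomial ring over a field, hence a principal ideal domain, $\Pic(B)=0$ and the required $\ell_0$ exists. Thus surjectivity rests on $\La$ having class number one (from $A$ being a PID together with the Eichler condition) and injectivity on the vanishing of $\Pic(B)$. In the write-up I would take particular care over two routine but error-prone points: verifying that the local optimality conditions really do glue to the single relation $X=D^\times\cap\widehat X$, and keeping track of the centralizer identifications so that the passage from the unit condition on $a$ to a statement about $\Pic(B)$ is valid simultaneously at all finite places.
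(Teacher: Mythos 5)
Your proposal is correct and takes essentially the same approach as the paper: the paper's proof is nothing more than a citation of the idelic argument of Vign\'eras (Thm.\ 5.11, stated there for quaternion algebras) with the assertion that it extends directly to dimension $n^2$, and your write-up---optimal embeddings as double cosets via Skolem--Noether, surjectivity from class number one of $\La$ (Eichler condition at $\infty$ plus $A$ a PID), injectivity from $\Pic(B)=0$ since $B=\F_{q^n}[T]$ is a PID---is precisely that idelic argument spelled out. The only flaw is the notational slip you yourself anticipate: in the injectivity step the comparison yields $a\in\widehat{\La}^\times\cdot g_1\widehat{\psi}(\widehat{L}^\times)g_1^{-1}$ (the conjugate copy, whose intersection with $\La$ is the optimally embedded image of $B$, so its Picard group still vanishes), not $\widehat{\La}^\times\cdot\widehat{\psi}(\widehat{L}^\times)$, and with that correction the argument goes through.
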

\begin{proof}
The idelic proof of this fact given in \cite[Thm. 5.11, p.
92]{Vigneras} in the case of quaternion algebras extends directly to
our case.
\end{proof}

\begin{lem}\label{lem-nR}
If $x\not \in R$, then $m_x(B_x)=1$.
\end{lem}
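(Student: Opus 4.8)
The plan is to reduce the statement to a concrete lattice count over the complete discrete valuation ring $\cO_x$. Since $x\notin R$, the algebra $D$ is split at $x$, so $D_x\cong \M_n(F_x)$, and $\La_x$ is a maximal $\cO_x$-order in $\M_n(F_x)$. As $\cO_x$ is a complete (hence Henselian) discrete valuation ring, any two maximal orders of $\M_n(F_x)$ are conjugate, so after an inner automorphism I may assume $\La_x=\M_n(\cO_x)$. Next I would unravel the structure of $B_x=\F_{q^n}\otimes_{\F_q}\cO_x$. Because $\F_{q^n}/\F_q$ is separable, $B_x$ is a finite \'etale $\cO_x$-algebra; by Hensel's lemma it splits as a product $B_x\cong \prod_{i=1}^{d}\cO_i$ of complete discrete valuation rings $\cO_i$, each unramified over $\cO_x$ of residue degree $n/d$, where $d=\gcd(n,\deg(x))$ is the number of irreducible factors over the residue field $\F_x$ of a degree-$n$ minimal polynomial of a generator of $\F_{q^n}/\F_q$. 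Correspondingly $L_x:=L\otimes_F F_x\cong\prod_{i=1}^d L_i$ with each $L_i=\mathrm{Frac}(\cO_i)$, and each $\cO_i$ is a principal ideal domain.

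The next step is to set up the standard dictionary between optimal embeddings and lattices. An embedding $L_x\hookrightarrow \M_n(F_x)=\End_{F_x}(V)$, $V=F_x^n$, is the same datum as a faithful $L_x$-module structure on $V$; comparing $F_x$-dimensions, and using that each $L_i$ has degree $n/d$ over $F_x$, forces $V$ to be free of rank one over $L_x$. Hence all such embeddings are $\GL_n(F_x)$-conjugate, and I fix one, identifying $V=L_x$. Tracking the standard lattice through a conjugating element $g\in\GL_n(F_x)$, an embedding lands in $\M_n(\cO_x)$ precisely when the lattice $M:=g^{-1}\cO_x^n\subset L_x$ is stable under $B_x$; it is optimal precisely when its multiplier ring $\{\ell\in L_x\mid \ell M\subseteq M\}$ equals $B_x$; and two embeddings are conjugate under $\La_x^\times=\GL_n(\cO_x)$ exactly when the associated lattices differ by scaling by $L_x^\times$ (this last point via the commutant $\End_{L_x}(V)=L_x$). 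Thus $m_x(B_x)$ equals the number of $B_x$-stable $\cO_x$-lattices in $L_x$ with multiplier ring $B_x$, counted modulo $L_x^\times$.

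I then compute this count. Since $B_x\cong\prod_i\cO_i$ is a product of discrete valuation rings, any $B_x$-stable lattice $M$ decomposes through the idempotents as $M\cong\prod_i M_i$ with $M_i$ a fractional $\cO_i$-ideal; each $M_i=\pi_i^{k_i}\cO_i$ is principal, so $M=c\,B_x$ for some $c\in L_x^\times$. Every such lattice has multiplier ring $B_x$ (as $B_x$, a product of maximal orders, is its own multiplier ring), hence is automatically optimal; in particular $M=B_x$ already exhibits one optimal embedding, so the count is nonzero. Finally all these lattices lie in the single orbit $L_x^\times\cdot B_x$, so there is exactly one conjugacy class and $m_x(B_x)=1$.

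The substantive content, rather than a genuine obstacle, lies in the two structural inputs: that splitness at $x$ makes $\La_x$ a conjugate of $\M_n(\cO_x)$, and that $B_x$ is \'etale over $\cO_x$, hence a product of discrete valuation rings. Once these are in place the count is forced, the only real thing to verify being that every $B_x$-lattice is free --- equivalently that each local factor has trivial ideal class --- which is immediate for discrete valuation rings. The one piece of bookkeeping I would be careful about is confirming that $\La_x^\times$-conjugacy corresponds exactly to scaling lattices by $L_x^\times$, and that optimality holds for all, not merely some, of these lattices.
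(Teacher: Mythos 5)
Your proof is correct, but it follows a genuinely different route from the paper's. The paper goes global: since $D_x\cong\M_n(F_x)$ and maximal orders in $\M_n(F_x)$ are conjugate, the local count at $x\notin R$ is the same as the one attached to the split algebra $\M_n(F)$ with maximal order $\M_n(A)$; Eichler's product formula (Theorem \ref{thm-EchEmb}) then reduces the claim to showing that the \emph{global} count for $\M_n(A)$ equals $1$, i.e.\ that the matrices in $\GL_n(A)$ with minimal polynomial $f_\xi$ form a single $\GL_n(A)$-conjugacy class, which is the Latimer--MacDuffee theorem \cite{LM} combined with the fact that $B=\F_{q^n}[T]$ is a principal ideal domain. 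You instead stay entirely local: your dictionary identifies optimal embeddings modulo $\La_x^\times$ with $B_x$-stable lattices in $L_x$ having multiplier ring $B_x$, taken modulo scaling by $L_x^\times$; since $B_x=\F_{q^n}\otimes_{\F_q}\cO_x$ is \'etale over the complete ring $\cO_x$, it is a product of discrete valuation rings, so every stable lattice is principal and there is exactly one class. In substance you prove the local analogue of Latimer--MacDuffee in the one case needed, where the relevant class group is visibly trivial. What your approach buys: it is self-contained (no appeal to Theorem \ref{thm-EchEmb}, nor to \cite{LM}), and it avoids the small inversion step implicit in the paper, where a product of non-negative integers equal to $1$ forces every factor to be $1$. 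What the paper's approach buys: it reuses machinery already set up, and its generator-and-minimal-polynomial bookkeeping with $f_\xi$ is recycled verbatim in Lemma \ref{lem-R} and Proposition \ref{propCojGrp}; note also that the paper quietly relies on $B$ being the maximal order of $L$ (so that integrality of the image of $\xi$ already implies optimality), a point which your multiplier-ring formulation handles explicitly rather than implicitly.
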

\begin{proof} To prove the lemma it is enough to show that there is a
unique optimal embedding of $B$ into $\M_n(A)$, up to conjugation by
$\GL_n(A)$. Indeed, if $x\not \in R$, then $D_x\cong \M_n(F_x)$ and
$\La_x^\times\cong \GL_n(\cO_x)$. We can apply Theorem
\ref{thm-EchEmb} to $\M_n(F)$ with $\La=\M_n(A)$. If we show that
there is a unique equivalence class of optimal embeddings of $B$
into $\M_n(A)$, then $m(B)=1$, so by Eichler's theorem $m_x(B_x)=1$.

Fix a generator $\xi$ of $\F_{q^n}$ over $\F_q$, i.e.,
$\F_{q^n}=\F_q[\xi]$. The minimal polynomial $f_\xi$ of $\xi$ over
$\F_q$ has degree $n$: $f_\xi(x)=x^n+a_{n-1}x^{n-1}+\cdots+a_n$.
Note that $B=A[\xi]$. To give an optimal embedding of $B$ into
$\M_n(A)$ is equivalent to specifying a matrix $X$ in $\GL_n(A)$
whose minimal polynomial is $f_\xi$. The subring of $\M_n(A)$
generated by $X$ and the scalar matrices is isomorphic to $B$. The
claim becomes the following statement. There exists a matrix in
$\GL_n(A)$ with minimal polynomial $f_\xi$ and any two such matrices
are conjugate in $\GL_n(A)$.

Consider $\F_{q^n}$ as an $n$-dimensional $\F_q$-vector space. The
action of $\xi$ by multiplication induces a linear transformation
whose minimal polynomial is $f_\xi$. This proves the existence of
the desired matrix $X$, since $\GL_n(\F_q)\subset \GL_n(A)$. The
uniqueness of the conjugacy class of $X$ in $\GL_n(A)$ follows from
the main theorem of \cite{LM}, since $B$ is a principal ideal
domain.
\end{proof}

\begin{lem}\label{lem-R}
If $x\in R$, then $m_x(B_x)=\wp(x,n)\cdot n$.
\end{lem}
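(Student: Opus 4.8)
The claim is that for a ramified place $x \in R$, the number of optimal embeddings $m_x(B_x)$ of $B_x = B \otimes_A \cO_x$ into the maximal order $\La_x$, up to conjugation by $\La_x^\times$, equals $\wp(x,n)\cdot n$. The plan is to work entirely inside the local division algebra $D_x$ and use the explicit structure of its maximal order established in the proof of Lemma \ref{lem4.3}(2). Recall that since $x \in R$ and $D_x$ is a central division algebra over $F_x$, the order $\La_x$ is the unique maximal $\cO_x$-order, it has a unique two-sided maximal ideal $\Pi_x$, and $\La_x/\Pi_x \cong \F_x^{(n)}$, the degree-$n$ extension of the residue field $\F_x$. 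The first step is to understand when $B_x$ embeds into $\La_x$ at all: an optimal embedding exists only if $L_x = L \otimes_F F_x$ embeds into $D_x$, which by the splitting criterion (cf. Lemma \ref{lem4.6} applied locally) requires that $x$ not split in $L = \F_{q^n}(T)$. Since $x$ splits into $\mathrm{gcd}(\deg(x),n)$ places, non-splitting is exactly the condition $\gcd(\deg(x),n)=1$, i.e.\ $\wp(x,n)=1$. This immediately gives $m_x(B_x)=0$ when $\wp(x,n)=0$, matching the formula.

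\textbf{The unramified-residue case.}
Now assume $\wp(x,n)=1$, so $\gcd(\deg(x),n)=1$ and $L_x$ embeds into $D_x$. In this case $L_x/F_x$ is the unramified extension of degree $n$, and its residue field is $\F_{x}^{(n)}$, which is precisely the residue field $\La_x/\Pi_x$. The element $\xi$ generating $\F_{q^n}$ over $\F_q$ reduces to a generator of $\F_x^{(n)}$ over $\F_x$. I would first establish that an optimal embedding $\phi: B_x \hookrightarrow \La_x$ is determined, up to $\La_x^\times$-conjugacy, by the reduction $\bar\phi: B_x/\pi_x B_x \cong \F_x^{(n)} \hookrightarrow \La_x/\Pi_x \cong \F_x^{(n)}$, together with a Hensel-type lifting argument showing that embeddings reducing to the same residual embedding are conjugate by an element of $1 + \Pi_x \subset \La_x^\times$.

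\textbf{Counting via Galois action on the residue field.}
The core of the count is then an automorphism-counting argument. The residual embeddings of $\F_x^{(n)}$ into itself form a torsor under $\Gal(\F_x^{(n)}/\F_x)$, which is cyclic of order $n$ (here I use $\deg(x)$ being coprime to $n$ so that $\F_x^{(n)}$ is genuinely degree $n$ over $\F_x$, with Galois group of order exactly $n$). The conjugation action of $\La_x^\times$ on these embeddings factors through the reduction $\La_x^\times \to (\F_x^{(n)})^\times$, and an inner automorphism by an element of $(\F_x^{(n)})^\times$ acts trivially on the commutative image $\F_x^{(n)}$. Thus $\La_x^\times$ acts \emph{trivially} on the set of $n$ residual embeddings, so the $n$ distinct Frobenius-twisted embeddings remain pairwise non-equivalent modulo $\La_x^\times$, giving $m_x(B_x)=n$. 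This yields $m_x(B_x)=\wp(x,n)\cdot n$ in all cases.

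\textbf{Anticipated obstacle.}
The step I expect to be most delicate is the lifting/rigidity argument: showing that two optimal embeddings of $B_x$ into $\La_x$ with the \emph{same} residual reduction are conjugate by an element of $\La_x^\times$ (and conversely that distinct Frobenius twists cannot be conjugate). The subtlety is that $\La_x^\times$ is larger than $(\F_x^{(n)})^\times$ — it contains the uniformizer $\Pi_x$ and the full unit filtration $1+\Pi_x^k$ — so I must verify that conjugation by $\Pi_x$ itself induces the Frobenius on the residue field, and that this accounts for exactly the $n$ orbits rather than collapsing them. Concretely, $\Pi_x$ normalizes $\La_x$ and its conjugation action on $\La_x/\Pi_x \cong \F_x^{(n)}$ is the Frobenius $z \mapsto z^{q_x}$; one must check that the subgroup generated by this Frobenius has order $n$ (again using $\gcd(\deg(x),n)=1$) and that it permutes the residual embeddings freely, confirming that the $n$ embeddings are genuinely distinct up to $\La_x^\times$-conjugacy. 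I would handle this by an explicit Hensel's lemma argument on the minimal polynomial $f_\xi$ over $\cO_x$, reducing the problem to a computation in the complete discrete valuation ring $\La_x$.
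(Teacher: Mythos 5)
Your approach is correct, and it takes a genuinely different route from the paper's. The paper argues through the explicit cyclic-algebra presentation $\La_x = B_x\oplus B_x\tau\oplus\cdots\oplus B_x\tau^{n-1}$, $\tau^n=\varpi_x^d$, $\tau\xi=\xi^q\tau$: the $n$ classes are represented by $\xi,\xi^q,\dots,\xi^{q^{n-1}}$, and they are pairwise inequivalent because any element of $D_x$ conjugating one to another lies in $L_x^\times\tau^i$ with $n\nmid i$, hence in the two-sided ideal $(\tau)$, whose elements have reduced norm divisible by $\pi_x$ and so are not units; completeness of the list comes from Skolem--Noether together with the valuation (every $g\in D_x^\times$ is a unit times $\tau^i\varpi_x^j$). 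You replace this by reduction modulo the maximal ideal $\Pi_x$: unit conjugation is invisible on the commutative residue field $\F_x^{(n)}$, so the residual embedding is an invariant of the $\La_x^\times$-class; the residual embeddings form a torsor under $\Gal(\F_x^{(n)}/\F_x)$ of size $n$ (this is where $\gcd(\deg(x),n)=1$ enters), and all of them are realized by Galois twists of a single embedding. Your route needs no explicit presentation of $\La_x$ -- only that it is the valuation ring of $D_x$ with residue field $\F_x^{(n)}$ -- and it makes the distinctness half of the count transparent; the paper's route disposes of both halves at once via the valuation, with no lifting argument required.

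The step you defer is exactly where the remaining work sits: two optimal embeddings with the same reduction are conjugate under $1+\Pi_x$. This is true, and your successive-approximation plan does close, but it requires a case analysis you should write out. Given roots $a_1\equiv a_2 \pmod{\Pi_x^k}$ of $f_\xi$ in $\La_x$, set $\delta=a_2-a_1$ and seek $\epsilon\in\Pi_x^k$ with $\epsilon a_1-a_1\epsilon\equiv\delta\pmod{\Pi_x^{k+1}}$. On $\Pi_x^k/\Pi_x^{k+1}$, left and right multiplication by $a_1$ differ by $\sigma_D^{-k}$, where $\sigma_D$ is the generator of $\Gal(\F_x^{(n)}/\F_x)$ induced by conjugation by a uniformizer of $\La_x$. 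If $\sigma_D^k\neq\mathrm{id}$, the map $\epsilon\mapsto\epsilon a_1-a_1\epsilon$ becomes multiplication by the nonzero element $\bar a_1-\sigma_D^{-k}(\bar a_1)$, hence is bijective and one solves for $\epsilon$; if $\sigma_D^k=\mathrm{id}$, this map is zero, but then $0=f_\xi(a_2)-f_\xi(a_1)\equiv f_\xi'(\bar a_1)\bar\delta$ on $\Pi_x^k/\Pi_x^{k+1}$, and separability of $f_\xi$ forces $\delta\in\Pi_x^{k+1}$ already, so no correction is needed at that stage. Completeness of $\La_x$ gives convergence. Two small inaccuracies, neither fatal: conjugation by a uniformizer induces $z\mapsto z^{q_x^{d'}}$ with $dd'\equiv 1\pmod{n}$ (where $\inv_x(D)=d/n$), not necessarily $z\mapsto z^{q_x}$ -- you only need that it generates the Galois group; and the ``freeness'' of that action is not needed for your count, since distinctness already follows from units acting trivially on residual embeddings. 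Also, $\La_x$ is a noncommutative complete local ring, not a discrete valuation ring in the commutative sense, though this affects nothing.
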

\begin{proof}
If $x\in R$, then $D_x$ is a division algebra by assumption. If
$\wp(x, n)=0$, then $L_x:=L\otimes_F F_x$ is not a field hence
cannot be embedded into $D_x$. Now assume $\wp(x, n)=1$, so that
there exists an embedding $L_x\hookrightarrow D_x$. Let
$\inv_x(D)=d/n$ and let $\xi$ be as in the proof of Lemma
\ref{lem-nR}. Then
$$
\La_x=B_x\oplus B_x\tau\oplus\cdots\oplus B_x\tau^{n-1},
$$
where $\tau^n=\varpi_x^d$ (here $\varpi_x$ is a fixed uniformizer of
$\cO_x$), $\tau a =a\tau$ for $a\in \cO_x$ and $\tau
\xi=\xi^{q}\tau$; see (A.2.6) on page 253 in \cite{LaumonCDV}. The
element $\tau$ generates a two-sided ideal $(\tau)$. Any element in
this ideal has reduced norm divisible by $\pi_x$, so cannot be
invertible. We conclude that $\La_x^\times= B_x^\times$. To give an
optimal embedding of $B_x$ into $\La_x$ is equivalent to specifying
an element of $\La_x$ with minimal polynomial $f_\xi$. But these
elements in $\La_x$ are exactly $\xi, \xi^q, \dots, \xi^{q^{n-1}}$.
These elements are conjugate in $\La_x$ only by elements in
$(\tau)$, which is not in $\La_x^\times$. Hence there are $n$
distinct optimal embeddings.
\end{proof}

\begin{prop}\label{propCojGrp}
The number of conjugacy classes of subgroups of $\G$ isomorphic to
$\F_{q^n}^\times$ is equal to $\wp(R,n) n^{\# R-1}$.
\end{prop}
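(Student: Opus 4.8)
The plan is to count the subgroups via the optimal embeddings of $B:=\F_{q^n}[T]$ into $\La$, whose number modulo $\G$ is $m(B)$ and is computable from Eichler's theorem together with Lemmas \ref{lem-nR} and \ref{lem-R}. By Lemma \ref{lem4.3}(2) every subgroup $H\subset\G$ isomorphic to $\F_{q^n}^\times$ is of the form $H=\F^\times$ with $\F:=\F_q[H]$ a subfield of $D$ isomorphic to $\F_{q^n}$; put $L:=\F\cdot F\cong\F_{q^n}(T)$ and let $\phi\colon L\hookrightarrow D$ be the inclusion. First I would observe that $\phi$ is automatically an optimal embedding of $B$ into $\La$: we have $\phi(B)\subset\La$ because $\F=\F_q[H]\subset\La$ and $T\in A\subset\La$, while conversely any element of $\phi(L)\cap\La$ is integral over $A$ and hence lies in the integral closure $\phi(B)$ of $A$ in $\phi(L)$. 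Thus subgroups $H\cong\F_{q^n}^\times$ correspond bijectively to optimally embedded copies of $L$, and conjugacy classes of $H$ to $\G$-conjugacy classes of such copies.

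Let $\cE$ denote the set of optimal embeddings of $B$ into $\La$, so that $m(B)=\#(\cE/\G)$ for the $\G$-action by conjugation. The group $\Sigma:=\Gal(\F_{q^n}/\F_q)\cong\Z/n\Z$ acts freely on $\cE$ by precomposition, two embeddings have the same image subfield exactly when they lie in a single $\Sigma$-orbit, and the $\Sigma$- and $\G$-actions commute. Hence the number of conjugacy classes of subgroups $H$ equals $\#\big(\cE/(\G\times\Sigma)\big)=\#\big((\cE/\G)/\Sigma\big)$. Therefore the proposition will follow once I show that $\Sigma$ acts \emph{freely} on the set $\cE/\G$ of conjugacy classes, for then this count is exactly $m(B)/n$, and it remains only to evaluate $m(B)$.

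The freeness of the $\Sigma$-action on $\cE/\G$ is the heart of the matter: I must show that for $\sigma\neq\mathrm{id}$ the embedding $\phi\circ\sigma$ is never $\G$-conjugate to $\phi$, i.e. that no $\gamma\in\G$ normalizes $\phi(L)$ while inducing a nontrivial automorphism $\sigma$ on it. I would argue locally at a ramified place $x\in R$, which exists because $\#R\geq2$. If $\wp(R,n)=0$ then $L$ does not embed into $D$ at all by Lemma \ref{lem4.6}, both sides of the asserted formula vanish, and there is nothing to prove; so assume $\wp(R,n)=1$. Then $\phi(L_x)$ is the unramified degree-$n$ extension of $F_x$ inside the division algebra $D_x$, and the proof of Lemma \ref{lem-R} identifies $\La_x^\times$ with the commutative group $\phi(B_x)^\times$. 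Consequently any $\gamma\in\G\subset\La_x^\times=\phi(B_x)^\times$ centralizes $\phi(L_x)\supset\phi(L)$, so conjugation by $\gamma$ fixes $\phi(L)$ pointwise and forces $\sigma=\mathrm{id}$; this gives the required freeness.

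It remains to compute $m(B)$. By Eichler's theorem (Theorem \ref{thm-EchEmb}) one has $m(B)=\prod_{x\in|F|-\infty}m_x(B_x)$, where $m_x(B_x)=1$ for $x\notin R$ by Lemma \ref{lem-nR} and $m_x(B_x)=\wp(x,n)\cdot n$ for $x\in R$ by Lemma \ref{lem-R}. Since $\infty\notin R$ this gives $m(B)=\prod_{x\in R}\wp(x,n)\,n=\wp(R,n)\,n^{\#R}$. Dividing by $n$, the number of conjugacy classes of subgroups isomorphic to $\F_{q^n}^\times$ equals $\wp(R,n)\,n^{\#R-1}$, as asserted.
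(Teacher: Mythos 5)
Your overall strategy is the same as the paper's: count optimal embeddings of $B$ into $\La$ modulo $\G$ via Theorem \ref{thm-EchEmb} and Lemmas \ref{lem-nR}, \ref{lem-R}, obtaining $\wp(R,n)n^{\#R}$, and then divide by $n$, the crucial point being that the $n$ Galois twists of a fixed embedding (in the paper's language: the $n$ elements with minimal polynomial $f_\xi$ inside a fixed subgroup) remain pairwise inequivalent under $\G$-conjugation, which is checked locally at a place of $R$. Your dictionary between subgroups and optimal embeddings --- including the observation that an embedding induced by a subgroup of $\G$ is automatically optimal, because $\La$ is integral over $A$ and $B$ is integrally closed --- and the bookkeeping with the free $\Sigma$-action commuting with $\G$ are correct, and they track the paper's proof step for step.

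The flaw is in your justification of the freeness of $\Sigma$ on $\cE/\G$, which, as you say, is the heart of the matter. You invoke the identification $\La_x^\times=\phi(B_x)^\times$ and conclude that every $\gamma\in\G$ centralizes $\phi(L_x)$. That identification (which, to be fair, is asserted verbatim in the paper's proof of Lemma \ref{lem-R}) is false as stated: in the decomposition $\La_x=B_x\oplus B_x\tau\oplus\cdots\oplus B_x\tau^{n-1}$ the element $1+\tau$ has reduced norm $\equiv 1 \pmod{\pi_x}$, hence lies in $\La_x^\times$, yet it is not in $B_x^\times$ and does not commute with $\xi$; so $\La_x^\times$ is strictly larger than $\phi(B_x)^\times$, is non-commutative, and does not centralize $\phi(L_x)$. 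What is true, and is what the paper's own proof of the proposition actually cites from Lemma \ref{lem-R}, is the weaker non-conjugacy statement: by Skolem--Noether, the elements of $D_x^\times$ conjugating $\phi$ to $\phi\circ\sigma$ with $\sigma\neq\mathrm{id}$ form a single coset $\tau^i\phi(L_x)^\times$ with $n\nmid i$, and $\ord_x(\Nr(\cdot))$ takes values in $id+n\Z\not\ni 0$ on this coset, whereas every element of $\La_x^\times$ has reduced norm in $\cO_x^\times$; hence no conjugator is a unit of $\La_x$. Alternatively, the commutativity you want is available one floor down: if $u\in\La_x^\times$ conjugated $\phi$ to $\phi\circ\sigma$, then reducing modulo the unique maximal two-sided ideal $\Pi_x$, conjugation by $u$ becomes trivial on the commutative quotient $\La_x/\Pi_x\cong\F_x^{(n)}$, forcing $\sigma$ to act trivially on $B_x/\varpi_x B_x\cong\F_x^{(n)}$ --- impossible for $\sigma\neq\mathrm{id}$ since $L_x/F_x$ is unramified. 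With either substitution your argument is complete, and it then coincides with the paper's proof.
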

\begin{proof} We keep the notation of the proof of Lemma \ref{lem-nR}. Giving
an optimal embedding of $B$ into $\La$ is equivalent to specifying
an element in $\La$ with minimal polynomial $f_\xi$. Hence the
number of optimal embeddings of $B$ into $\La$ up to conjugation by
$\G$ is equal to the number of conjugacy classes of elements in $\G$
with minimal polynomial $f_\xi$. By Theorem \ref{thm-EchEmb} and
Lemmas \ref{lem-nR}, \ref{lem-R}, the number of such conjugacy
classes is equal to $\wp(R,n)n^{\# R}$. The cyclic subgroup
generated in $\G$ by an element with minimal polynomial $f_\xi$ is
finite and isomorphic to $\F_{q^n}^\times$. Conversely, in a
subgroup of $\G$ isomorphic to $\F_{q^n}^\times$ we can find exactly
$n$ elements with minimal polynomial $f_\xi$ (over $F$). Hence to
prove the proposition it remains to show that if $\gamma_1\neq
\gamma_2$ satisfy $f_\xi(\gamma_i)=0$ and generate the same subgroup
in $\G$, then they are not $\G$-conjugate. Suppose $\gamma_2=\gamma
\gamma_1\gamma^{-1}$ with $\gamma\in \G$. Then $\gamma_1$ and
$\gamma_2$ are also conjugate in $\La_x^\times$ for $x\in R$. But as
we saw in the proof of Lemma \ref{lem-R} this is not the case.
\end{proof}

\begin{prop}\label{prop4.12}
The number of $\G$-orbits of vertices of $\cB$ with stabilizers
isomorphic to $\F_{q^n}^\times$ is equal to $\wp(R, n)n^{\# R-1}$.
\end{prop}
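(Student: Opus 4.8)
The plan is to exhibit a bijection between the set of $\G$-orbits of vertices of $\cB$ whose stabilizer is isomorphic to $\F_{q^n}^\times$ and the set of $\G$-conjugacy classes of subgroups of $\G$ isomorphic to $\F_{q^n}^\times$; once this bijection is in place, Proposition \ref{propCojGrp} gives the count $\wp(R,n)n^{\#R-1}$ immediately. The two ingredients that make the bijection work are Lemma \ref{lemUV}, which says that a subgroup $H\subset\G$ isomorphic to $\F_{q^n}^\times$ fixes a \emph{unique} vertex $v_H$ of $\cB$, and Lemma \ref{lem4.3}(3), which pins down the possible stabilizers of simplices.

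For the forward map I would send a conjugacy class $[H]$ of a subgroup $H\cong\F_{q^n}^\times$ to the $\G$-orbit of its unique fixed vertex $v_H$. First I would observe that this vertex has the right stabilizer: since $H\subset\G_{v_H}$ and Lemma \ref{lem4.3}(3) forces $\G_{v_H}\cong\F_{q^d}^\times$ for some $d\mid n$, and since a group isomorphic to $\F_{q^n}^\times$ can sit inside $\F_{q^d}^\times$ only when $d=n$, we must have $d=n$ and in fact $\G_{v_H}=H$. Next I would check compatibility with conjugation: for $g\in\G$ the subgroup $gHg^{-1}$ fixes $gv_H$, so by the uniqueness in Lemma \ref{lemUV} we get $v_{gHg^{-1}}=gv_H$; hence conjugate subgroups produce vertices in a single $\G$-orbit and the induced map on classes is well defined.

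The inverse map sends the orbit of a vertex $v$ with $\G_v\cong\F_{q^n}^\times$ to the conjugacy class of the subgroup $\G_v$; this is well defined because $\G_{gv}=g\G_v g^{-1}$. That the two maps are mutually inverse then follows formally: starting from $[H]$ we recover $[\G_{v_H}]=[H]$ by the identity $\G_{v_H}=H$ established above, while starting from the orbit of $v$ we recover the orbit of $v_{\G_v}=v$ by the uniqueness clause of Lemma \ref{lemUV}. I expect the only point requiring care to be the verification that $\G_{v_H}=H$ (not merely $H\subset\G_{v_H}$), since this is what guarantees both that the forward map lands in orbits of the prescribed stabilizer type and that the composite is the identity; everything else is bookkeeping with the orbit-stabilizer relation $\G_{gs}=g\G_s g^{-1}$.
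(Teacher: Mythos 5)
Your proof is correct and takes essentially the same approach as the paper: both reduce to Proposition \ref{propCojGrp} by establishing a bijection between $\G$-orbits of vertices with stabilizer isomorphic to $\F_{q^n}^\times$ and conjugacy classes of such subgroups, with Lemma \ref{lemUV} providing the uniqueness/injectivity. The only difference is cosmetic—you construct the map from subgroup classes to vertex orbits rather than the reverse, and you spell out the verification $\G_{v_H}=H$ via Lemma \ref{lem4.3}(3), a point the paper leaves implicit in its surjectivity claim.
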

\begin{proof} Using Proposition \ref{propCojGrp}, it is enough to
show that there is a one-to-one correspondence between the set $S$
of $\G$-orbits in $\Ver(\cB)$ with stabilizers isomorphic to
$\F_{q^n}^\times$ and the set $S'$ of conjugacy classes of subgroups
of $\G$ isomorphic to $\F_{q^n}^\times$.

Since the vertices in the same $\G$-orbit have $\G$-conjugate
stabilizers, the map $S\to S'$ given by $v\mapsto \G_v$ is
well-defined. This map is surjective by Lemma \ref{BTfix}. If the
map is not injective, then there exist $v$ and $w$, which are not in
the same $\G$-orbit but $\G_w=\gamma \G_v \gamma^{-1}$. Now $\gamma
v\neq w$, but their stabilizers are equal in $\G$. In particular a
subgroup of $\G$ isomorphic to $\F_{q^n}^\times$ fixes two distinct
vertices in $\cB$, which contradicts Lemma \ref{lemUV}.
\end{proof}

\begin{thm}\label{thmChi} Assume $n$ is prime.
\begin{enumerate}
\item If $s\in S_i(\cB)$ with $i\geq 1$, then $\G_s=\F_q^\times$.
\item For a vertex $v\in \Ver(\cB)$, the stabilizer $\G_v$ is either
$\F_q^\times$ or is isomorphic to $\F_{q^n}^\times$. The number of
$\G$-orbits of vertices of $\cB$ with $\G_v\cong \F_{q^n}^\times$ is
equal to $\wp(R, n) n^{\# R-1}$.
\item
$$
\chi(\G\bs\cB)= \frac{(q-1)(-1)^{n-1}}{[n]_q}\prod_{x\in
R}[n-1]_{q_x} +\wp(R,n)\cdot n^{\#
R-1}\left(1-\frac{q-1}{q^n-1}\right).
$$
\end{enumerate}
\end{thm}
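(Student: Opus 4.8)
The plan is to prove the three parts in order, using the structural lemmas already established and then feeding the stabilizer data into Theorem \ref{thmEP}.

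For part (1), I would argue that since $n$ is prime, the only divisors of $n$ are $1$ and $n$, so by Lemma \ref{lem4.3}(3) the stabilizer $\G_s$ of any simplex is isomorphic either to $\F_q^\times$ or to $\F_{q^n}^\times$. If $\G_s\cong \F_{q^n}^\times$ for a simplex $s$ of positive dimension $i\geq 1$, then any vertex $v$ of $s$ is fixed by $\G_s$, and so is the whole simplex $s$ containing a positive-dimensional face. But a subgroup isomorphic to $\F_{q^n}^\times$ fixes a \emph{unique} vertex of $\cB$ by Lemma \ref{lemUV}, so it cannot fix the two or more distinct vertices of a simplex of dimension $\geq 1$. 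This contradiction forces $\G_s=\F_q^\times$, proving (1).

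For part (2), the dichotomy $\G_v\cong \F_q^\times$ or $\G_v\cong \F_{q^n}^\times$ is again immediate from Lemma \ref{lem4.3}(3) together with the primality of $n$. The count of $\G$-orbits of vertices with $\G_v\cong \F_{q^n}^\times$ is then exactly the content of Proposition \ref{prop4.12}, which gives $\wp(R,n)\,n^{\#R-1}$; so part (2) is essentially a repackaging of the earlier results under the primality hypothesis.

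For part (3), I would substitute the stabilizer information from (1) and (2) into the formula of Theorem \ref{thmEP}. By part (1), every simplex of positive dimension contributes a factor $1-\tfrac{1}{\#\G_s}=1-\tfrac{1}{q-1}$ only through its vertices being trivial in those terms; more precisely, for $i\geq 1$ the summand $1-\tfrac{1}{\#\G_s}$ vanishes because $\#\G_s=q-1$ forces $1-\tfrac{1}{q-1}$ — wait, this does not vanish, so instead I expect the positive-dimensional terms to cancel in the alternating sum. The cleaner route is: the correction sum $\sum_{i=0}^{n-1}(-1)^i\sum_{s\in S_i}(1-\tfrac{1}{\#\G_s})$ splits according to whether $\#\G_s=q-1$ or $\#\G_s=q^n-1$. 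For the vertices with $\G_v\cong \F_{q^n}^\times$, each contributes $(+1)\cdot(1-\tfrac{q-1}{q^n-1})$, and there are $\wp(R,n)\,n^{\#R-1}$ such orbits by (2), giving the second term in the claimed formula. All remaining simplices (including the vertices with $\G_v=\F_q^\times$) have stabilizer $\F_q^\times$ of order $q-1$; I would need to check that the alternating sum of these $(1-\tfrac{1}{q-1})$-contributions over all orbits with trivial-type stabilizer telescopes to zero, using that $\chi$ of the underlying complex with these weights reduces to a multiple of $\chi(\G\bs\cB)$ itself, or more directly that this part is absorbed into the volume term. The main volume term $\tfrac{1}{n}(-1)^{n-1}[n-1]_q\,\Vol(\G\bs G,\tfrac{dh}{d\delta})$ must then be evaluated: here I would invoke the known mass formula for $\Vol(\bg\bs G)$ in terms of the local volumes of $\La_x^\times$, which for the ramified places $x\in R$ produces factors $[n-1]_{q_x}$ and for the scaling by the center $\F_q^\times$ produces the $(q-1)/[n]_q$ prefactor, matching $\tfrac{(q-1)(-1)^{n-1}}{[n]_q}\prod_{x\in R}[n-1]_{q_x}$.

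The hard part will be the explicit evaluation of the covolume $\Vol(\bg\bs G,\tfrac{dh}{d\delta})$ as a product of local terms $\prod_{x\in R}[n-1]_{q_x}$ up to the central normalization; this requires the strong approximation / mass-formula computation relating the global covolume of the unit group of a maximal order to local volumes of $\La_x^\times$, where the ramified local factors are governed by the residue-field data $q_x$. I also expect a bookkeeping subtlety in confirming that the $\F_q^\times$-stabilizer contributions in the correction sum combine with the normalization so that only the $\F_{q^n}^\times$-vertices leave a visible residual term; reconciling the central factor $\F_q^\times$ (which acts trivially on $\cB$) against the $\PGL$-normalized measure is the delicate point, and I would handle it by passing throughout to $\bg=\G/\F_q^\times$ and tracking the factor of $(q-1)$ carefully.
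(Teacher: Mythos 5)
Your parts (1) and (2) are correct and essentially identical to the paper's argument: primality of $n$ together with Lemma \ref{lem4.3}(3) gives the dichotomy $\F_q^\times$ versus $\F_{q^n}^\times$, Lemma \ref{lemUV} rules out $\F_{q^n}^\times$ for simplices of positive dimension, and Proposition \ref{prop4.12} supplies the orbit count. Part (3), however, has a genuine gap, located exactly where you wrote ``wait, this does not vanish.'' The resolution is \emph{not} that the $\left(1-\frac{1}{q-1}\right)$-terms telescope in the alternating sum or get absorbed into the volume term: neither happens, and there is no reason for the alternating sum of such terms over the trivial-type orbits to vanish (it would be $\left(1-\frac{1}{q-1}\right)$ times an alternating count of orbits, which is nonzero in general). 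The point you are missing is that Theorem \ref{thmEP} is a statement about discrete subgroups of $G=\PGL_n(K)$, so it must be applied to $\bg=\G/\F_q^\times$, not to $\G$. Since $\F_q^\times$ acts trivially on $\cB$, one has $\G\bs\cB=\bg\bs\cB$ and $\bg_s=\G_s/\F_q^\times$ for every simplex $s$. Hence a simplex with $\G_s=\F_q^\times$ has \emph{trivial} stabilizer in $\bg$, and its correction term is exactly $1-\frac{1}{1}=0$; no cancellation mechanism is needed. The only nonzero correction terms come from the $\wp(R,n)\,n^{\# R-1}$ vertex orbits with $\G_v\cong\F_{q^n}^\times$, for which $\#\bg_v=\frac{q^n-1}{q-1}$; each contributes $1-\frac{q-1}{q^n-1}$ with sign $(-1)^0=+1$, which is precisely the second term of the claimed formula. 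This also settles the ``factor of $(q-1)$'' you flagged at the end: it enters only through $\#\bg_v$, not through any global bookkeeping.

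Two smaller points. The covolume is not re-derived in the paper from a mass formula; it is quoted from \cite{PapCrelle} (Proposition 4.1 and (6.9) there) in the form
$\Vol\left(\bg\bs G, \frac{dh}{d\delta}\right)=\frac{n(q-1)}{[n-1]_q[n]_q}\prod_{x\in R}[n-1]_{q_x}$,
where one must note that the sign $(-1)^{\# R\cdot(n-1)}$ appearing in the cited formula equals $+1$ because $\# R\cdot(n-1)$ is even for prime $n$ (for $n=2$ one uses that $\# R$ is even). Substituting into the main term of Theorem \ref{thmEP} gives
$\frac{1}{n}(-1)^{n-1}[n-1]_q\cdot\frac{n(q-1)}{[n-1]_q[n]_q}\prod_{x\in R}[n-1]_{q_x}=\frac{(q-1)(-1)^{n-1}}{[n]_q}\prod_{x\in R}[n-1]_{q_x}$,
matching the first term; your sketch of extracting local factors $[n-1]_{q_x}$ from a mass formula is plausible in spirit but is not carried out, whereas citing the known result suffices. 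Finally, you must verify the hypothesis of Theorem \ref{thmEP} that the group has a normal torsion-free subgroup of finite index: the paper does this by observing that the image in $G$ of a principal congruence subgroup of $\G$ is such a subgroup.
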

\begin{proof} Let $s$ be an $i$-simplex of $\cB$. By Lemma \ref{lem4.3}, $\G_s\cong \F_{q^d}^\times$ for some
$d|n$. If $n$ is prime, then $d=1$ or $d=n$. Suppose $i>0$. Since
$\G_s$ fixes all the vertices of $s$, $\G_s$ fixes at least two
distinct vertices of $\cB$. By Lemma \ref{lemUV}, $\G_s\not \cong
\F_{q^n}^\times$, so $\G_s=\F_q^\times$. On the other hand, by
Proposition \ref{prop4.12}, the number of $\G$-orbits of vertices of
$\cB$ with stabilizers isomorphic to $\F_{q^n}^\times$ is equal to
$\wp(R, n)n^{\# R-1}$. This proves (1) and (2).

By Proposition 4.1 and (6.9) in \cite{PapCrelle},
\begin{align}\label{myVolume}
\Vol\left(\bg\bs G,
\frac{dh}{d\delta}\right)&=n\frac{(q-1)(-1)^{\#R\cdot
(n-1)}}{[n-1]_q[n]_q}\prod_{x\in R}[n-1]_{q_x}\\ \nonumber &=
\frac{n(q-1)}{[n-1]_q[n]_q}\prod_{x\in R}[n-1]_{q_x}.
\end{align}
(The last equality follows from the fact that $\# R\cdot (n-1)$ is
even for prime $n$.)

$\bg$ has normal torsion-free subgroups of finite index; for
example, the image in $G$ of a principal congruence subgroup of $\G$
is such a subgroup. Now the formula of (3) follows from Theorem
\ref{thmEP}.
\end{proof}

\begin{cor}\label{corCong} If $n$ is prime, then
$\chi(\G\bs\cB)\equiv 1\ (\mod\ q)$.
\end{cor}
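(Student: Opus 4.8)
The plan is to reduce the explicit formula of Theorem \ref{thmChi}(3) modulo $q$. The key elementary observation is that $q^k\equiv 0\pmod q$ for every $k\geq 1$, so that $q^k-1\equiv -1\pmod q$. Consequently $[m]_q=\prod_{k=1}^m(q^k-1)\equiv(-1)^m\pmod q$, and likewise, since $q_x=q^{\deg(x)}$ with $\deg(x)\geq 1$, we have $[m]_{q_x}\equiv(-1)^m\pmod q$. In particular the denominators $[n]_q$ and $q^n-1$ appearing in the formula are congruent to $\pm 1$ modulo $q$; being coprime to the residue characteristic $p$, they are units in $\Z/q\Z$. Since $\chi(\G\bs\cB)$ is an integer, I may therefore interpret the right-hand side of the formula in $\Z/q\Z$, replacing each such denominator by its inverse.

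First I would handle the second summand $B:=\wp(R,n)\,n^{\#R-1}\bigl(1-\tfrac{q-1}{q^n-1}\bigr)$. Modulo $q$ both $q-1$ and $q^n-1$ are congruent to $-1$, so $\tfrac{q-1}{q^n-1}\equiv 1$ and hence $1-\tfrac{q-1}{q^n-1}\equiv 0\pmod q$. Equivalently, $1-\tfrac{q-1}{q^n-1}=\tfrac{q^n-q}{q^n-1}=q\cdot\tfrac{q^{n-1}-1}{q^n-1}$ is $q$ times a unit. Since $\wp(R,n)\,n^{\#R-1}$ is an integer, it follows that $B\equiv 0\pmod q$.

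Next I would reduce the first summand $A:=\tfrac{(q-1)(-1)^{n-1}}{[n]_q}\prod_{x\in R}[n-1]_{q_x}$. Using the congruences above, $q-1\equiv -1$, $[n]_q^{-1}\equiv(-1)^n$ (as $(-1)^n$ is its own inverse), and $\prod_{x\in R}[n-1]_{q_x}\equiv(-1)^{(n-1)\#R}$. Collecting signs gives $A\equiv(-1)^{1+(n-1)+n+(n-1)\#R}=(-1)^{2n+(n-1)\#R}\pmod q$. The exponent is even: $2n$ is even, and $(n-1)\#R$ is even for prime $n$ — this is exactly the parity fact already used in the proof of Theorem \ref{thmChi} (for $n=2$ the standing hypotheses force $\#R$ even, while for odd $n$ the factor $n-1$ is even). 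Hence $A\equiv 1\pmod q$.

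Combining the two reductions yields $\chi(\G\bs\cB)=A+B\equiv 1+0=1\pmod q$, as claimed. No step presents a genuine obstacle; the only point requiring a moment's care is the legitimacy of reducing a formula with denominators modulo $q$, which is justified by the integrality of the Euler--Poincar\'e characteristic together with the fact that the denominators are prime to $p$ and hence invertible in $\Z/q\Z$.
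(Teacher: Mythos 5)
Your proposal is correct and is essentially the paper's own proof: the paper likewise reduces the formula of Theorem \ref{thmChi}(3) modulo $q$ to obtain $\chi(\G\bs\cB)\equiv(-1)^{\#R\cdot(n-1)}\ (\mathrm{mod}\ q)$ and then invokes the same parity fact ($n-1$ even for odd primes, $\#R$ even when $n=2$). You have merely spelled out the sign bookkeeping and the justification for working with unit denominators in $\Z/q\Z$, which the paper leaves as ``easy to see.''
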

\begin{proof} From the formula for $\chi(\G\bs\cB)$ in Theorem
\ref{thmChi}, it is easy to see that $\chi(\G\bs\cB)\equiv (-1)^{\#
R\cdot (n-1)}\ (\mod\ q)$. On the other hand, $\# R\cdot (n-1)$ is
even.
\end{proof}

\begin{example}
Let $n=3$ and $R=\{x,y\}$ with $\deg(x)=\deg(y)=1$. Then
$$
\chi(\G\bs\cB)=\frac{(q-1)^3(q^2-1)^2}{(q-1)(q^2-1)(q^3-1)}+3\left(1-\frac{q-1}{q^3-1}\right)=q+1.
$$

Let $n=3$ and $R=\{x,y\}$ with $\deg(x)=1$ and $\deg(y)=3$. Then
$$
\chi(\G\bs\cB)=\frac{(q-1)^2(q^2-1)(q^3-1)(q^6-1)}{(q-1)(q^2-1)(q^3-1)}=(q-1)(q^6-1).
$$
\end{example}

\begin{rem}\label{rem3.16} The first equality in (\ref{myVolume}) is proven in
\cite{PapCrelle} for arbitrary $n$, assuming $D_x$ is a division
algebra for all $x\in R$. Note that this assumption also implies the
second equality in (\ref{myVolume}) as then $\#R\cdot (n-1)$ is even
(see the beginning of this section). If $\wp(R, m)=0$ for every
$m>1$ dividing $n$, then by Lemma \ref{lem4.6} the only torsion
elements in $\G$ are the elements of the center $\F_q^\times$. Now
one can apply Theorem \ref{thmEP} to conclude that $\chi(\G\bs \cB)$
is given by the formula in Theorem \ref{thmChi}. On the other hand,
if we do not assume $\wp(R, m)=0$ for every $m>1$ dividing $n$, then
the assumption on $n$ being prime cannot be omitted from Theorem
\ref{thmChi}. For example, if we take $n=4$ and $R=\{x,y\}$ with
$\deg(x)=\deg(y)=1$, then the formula for $\chi(\G\bs \cB)$ in
Theorem \ref{thmChi} generally gives non-integer values.
\end{rem}

\begin{rem} It is not completely obvious that the formula for $\chi(\G\bs\cB)$ in
Theorem \ref{thmChi} always produces integer values. As we indicated
in the previous remark, the requirement on $n$ being prime is
necessary to make this happen. To see that the formula produces
integer values, one can argue as follows:

First, suppose $n$ divides $\deg(y)$ for some $y\in R$, i.e.,
$\wp(R,n)=0$. Fix some $z\in R$, $z\neq y$. We have
$[n]_q=(q^n-1)[n-1]_q$. Now $(q^n-1)$ divides $(q_y-1)$ and
$[n-1]_q$ divides $[n-1]_{q_z}$ since $q|q_z$. Therefore, $[n]_q$
divides $[n-1]_{q_y}[n-1]_{q_z}$.

Next, suppose $n$ is coprime to $\deg(x)$ for all $x\in R$, i.e.,
$\wp(R,n)=1$. We treat $q$ as a parameter and write
$q^n-1=(q-1)\Phi(q)$. The formula in Theorem \ref{thmChi} becomes
$$
\frac{(-1)^{n-1}\prod_{x\in R}[n-1]_{q_x}+[n-1]_qn^{\#
R-1}(\Phi(q)-1)}{\Phi(q)[n-1]_q}.
$$
If $n$ is prime, then $\Phi(q)=q^{n-1}+q^{n-2}+\cdots +q+1$ is
coprime to $[n-1]_q$ (as polynomials in $\C[q]$). Since $[n-1]_q$
obviously divides the numerator, it is enough to show that $\Phi(q)$
divides the numerator. $\Phi(q)=(q-\zeta_1)\cdots (q-\zeta_{n-1})$
has degree $n-1$ and its zeros $\{\zeta_1,\dots, \zeta_{n-1}\}$ are
the primitive $n$th roots of $1$. It is enough to show that any
$\zeta\in \{\zeta_1,\dots, \zeta_{n-1}\}$ is a zero of the
numerator. Note that $\{\zeta, \dots, \zeta^{n-1}\}=\{\zeta_1,\dots,
\zeta_{n-1}\}$. Let $x\in R$ and $m:=\deg(x)$. Consider
$$
f(q)=[n-1]_{q_x}=(q^{m(n-1)}-1)(q^{m(n-2)})\cdots (q^m-1).
$$
Since $m$ is coprime to $n$,
\begin{align*}
f(\zeta)&=(\zeta^{m(n-1)}-1)(\zeta^{m(n-2)}-1)\cdots (\zeta^m-1)\\
&=\prod_{i=1}^{n-1}(\zeta_i-1)=(-1)^{n-1}\Phi(1)=(-1)^{n-1}n.
\end{align*}
Hence the numerator with $q=\zeta$ is equal to
$$
(-1)^{(n-1)(\#R+1)}n^{\#R}-(-1)^{n-1}n^{\# R}=0.
$$
\end{rem}

\begin{rem}
Let $n$ be arbitrary and $\bg'$ be a normal, finite index,
torsion-free subgroup of $\bg$. Since $\bg'$ is a discrete,
cocompact, torsion-free subgroup of $G$, by a result of Garland
\cite{Garland} and Casselman \cite{Casselman}, $H^i(\bg', \Q)\cong
H^i(\bg'\bs \cB, \Q)=0$ for $1\leq i\leq n-2$. There is a spectral
sequence $H^i(\bg/\bg', H^j(\bg'\bs\cB, \Q))\Rightarrow
H^{i+j}(\bg\bs \cB, \Q)$. Since $\bg/\bg'$ is finite, $H^i(\bg/\bg',
\Q)=0$ for $i\geq 1$. We conclude that
$$
H^i(\G\bs\cB, \Q)=0 \text{ for } 1\leq i\leq n-2.
$$
Since $\dim_\Q H^0(\G\bs\cB, \Q)=1$, $$\dim_\Q H^{n-1}(\G\bs\cB,
\Q)= (-1)^{n-1}(\chi(\G\bs\cB)-1).$$ Thus, when $n$ is prime,
Theorem \ref{thmChi} provides an explicit expression for the
dimension of this cohomology group. Moreover, Corollary
\ref{corCong} implies that this number is always a multiple of $q$.
\end{rem}

\begin{thm}\label{LastThm} Suppose $n$ is prime, and $R=\{x,y\}$ with
$\deg(x)=\deg(y)=1$. For $0\leq i\leq n-1$, denote by $\theta_i$ the
number of $i$-simplices in $\G\bs\cB$. Then $\theta_0=n$, and for
$1\leq i\leq n-1$
$$
\theta_i = \frac{n}{(i+1)} \frac{(q-1)}{(q^n-1)}
\sum_{\substack{\bp\in \Par(n)\\ \ell(\bp)=i+1}} \qbinom{n}{\bp}.
$$
\end{thm}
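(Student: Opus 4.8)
The goal is to count the $\G$-orbits of $i$-simplices of $\cB$ in the special case $R=\{x,y\}$ with $\deg(x)=\deg(y)=1$, $n$ prime. The main tool is the degree-counting from Lemma \ref{lem3.3} together with the orbit-stabilizer bookkeeping that the stabilizer structure of Theorem \ref{thmChi} provides. I plan to count \emph{incidences} between vertices and $i$-simplices in the quotient $\G\bs\cB$ in two ways, playing the local degree formula against a global volume-type count.

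Let me sketch the proof strategy.

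**Vertex count $\theta_0 = n$.** Here $\wp(R,n)=1$ since $\deg(x)=\deg(y)=1$ are coprime to $n$, and $\#R=2$, so Theorem \ref{thmChi}(2) gives $\wp(R,n)\,n^{\#R-1}=n$ orbits of vertices with stabilizer $\F_{q^n}^\times$. I first want to argue that \emph{every} vertex orbit has stabilizer $\F_{q^n}^\times$, i.e.\ there are no vertices with stabilizer merely $\F_q^\times$. The cleanest route: since $G$ acts transitively on $\Ver(\cB)$ but the \emph{type} of a vertex lies in $\Z/n\Z$ and $\G$ preserves type (as $\ord(\det\gamma)=0$), the orbits of $\G$ split across $n$ type-classes; I expect exactly one orbit per type, each with the maximal stabilizer. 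Establishing that $\theta_0=n$ and that each such vertex has $\#\G_v = q^n-1$ is the base of the double count.

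**The incidence double count for $1\le i\le n-1$.** For $i\ge 1$, Theorem \ref{thmChi}(1) tells us every $i$-simplex has trivial stabilizer $\F_q^\times$ (order $q-1$). The plan is to count pairs $(v,s)$ where $v$ is a vertex and $s$ an $i$-simplex with $v\in s$, upstairs in $\cB$ weighted by orbits downstairs. Fix one of the $n$ vertex orbits and a representative $v$; by Lemma \ref{lem3.3} the number of $i$-simplices of $\cB$ containing $v$ is $\sum_{\ell(\bp)=i+1}\qbinom{n}{\bp}$. I then pass to $\G$-orbits: the stabilizer $\G_v\cong\F_{q^n}^\times$ (order $q^n-1$) acts on the set of $i$-simplices through $v$, and since an $i$-simplex $s\ni v$ has trivial stabilizer $\F_q^\times$ in $\G$, the $\G_v$-orbit of $s$ has size $(q^n-1)/(q-1)$. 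Hence the number of $\G_v$-orbits of $i$-simplices through $v$ is
\[
\frac{q-1}{q^n-1}\sum_{\substack{\bp\in\Par(n)\\ \ell(\bp)=i+1}}\qbinom{n}{\bp}.
\]
Summing over the $n$ vertex orbits, and using that each $i$-simplex downstairs has exactly $i+1$ vertices — which, because the $n$ vertex orbits are distinguished by type and the $i+1$ vertices of a simplex carry distinct types, get distributed so that each simplex is counted $(i+1)$ times in the incidence sum — I arrive at
\[
(i+1)\,\theta_i = n\cdot\frac{q-1}{q^n-1}\sum_{\substack{\bp\in\Par(n)\\ \ell(\bp)=i+1}}\qbinom{n}{\bp},
\]
which rearranges to the claimed formula.

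**The main obstacle.** The delicate point is the correct orbit-counting in the incidence relation: I must verify that when I sum "$\G_v$-orbits of $i$-simplices through $v$" over the $n$ vertex-orbit representatives, each $\G$-orbit of an $i$-simplex downstairs is counted exactly $i+1$ times, once per vertex. This requires confirming that the $i+1$ vertices of an $i$-simplex $s$ lie in $i+1$ \emph{distinct} $\G$-orbits (not that two vertices of $s$ could be $\G$-equivalent). This follows because the vertices of any simplex have distinct types (stated in the Definition of $\Type$), $\G$ preserves type, and each type supports a single vertex orbit; but making this rigorous — and checking that the local $\G_v$-orbit count is independent of which representative $v$ of a given orbit is chosen — is where the care lies. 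A secondary subtlety is ruling out any vertex with the smaller stabilizer $\F_q^\times$, so that the factor $(q^n-1)$ is uniform across all $n$ vertex orbits; this rests on Lemma \ref{lemUV} and the type argument above.
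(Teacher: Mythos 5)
Your double-counting step is sound and is in fact identical to the paper's own incidence relation (\ref{eqLinRel}): once one knows that \emph{every} vertex of $\G\bs\cB$ has preimages with stabilizer isomorphic to $\F_{q^n}^\times$, the count of incidences between vertex orbits and $i$-simplex orbits (using Lemma \ref{lem3.3}, the freeness of the $\G_v/\F_q^\times$-action on simplices through $v$, and the fact that the vertices of a simplex have distinct types) gives exactly the claimed formula. The subtleties you flag as the ``main obstacle'' --- that each $i$-simplex is counted $i+1$ times and that the local count is independent of the chosen representative --- are indeed handled by the type argument, and are not where the difficulty lies.

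The genuine gap is the first step: you never prove $\theta_0=n$; you only assert ``I expect exactly one orbit per type.'' Theorem \ref{thmChi}(2) gives that exactly $\wp(R,n)n^{\#R-1}=n$ vertex orbits have stabilizer $\F_{q^n}^\times$; it does \emph{not} exclude additional orbits whose stabilizer is merely $\F_q^\times$. The type-preservation argument yields only the lower bound $\theta_0\geq n$ (at least one orbit of each type), and no soft transitivity or type consideration can supply the upper bound: for instance, for $n=3$ and $R=\{x,y\}$ with $\deg(x)=1$, $\deg(y)=3$, every vertex stabilizer is $\F_q^\times$ and $\chi(\G\bs\cB)=(q-1)(q^6-1)$, so there are many $\G$-orbits of vertices of each type. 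Thus any proof of $\theta_0=n$ must use $\deg(x)=\deg(y)=1$ quantitatively, and this is precisely the hard content of the theorem. The paper obtains it by keeping the unknown $\theta_0$ in the incidence relations (\ref{eqLinRel}), substituting them into $\chi(\G\bs\cB)=\sum_{i}(-1)^i\theta_i$, inserting the explicit value of $\chi(\G\bs\cB)$ from Theorem \ref{thmChi}(3) (which rests on the volume computation), and then checking that the resulting linear equation forces $\theta_0=n$ --- this last step being exactly the $q$-series identity of Lemma \ref{lemAndrews}. Your proposal never invokes $\chi(\G\bs\cB)$ or Lemma \ref{lemAndrews}, so it has no mechanism to rule out extra vertex orbits with trivial stabilizer; as written it proves only the conditional statement ``if $\theta_0=n$, then the formula for $\theta_i$ holds.''
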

\begin{proof} In general, by Theorem \ref{thmChi}, the number of vertices in
$\G\bs\cB$ whose preimages have stabilizers isomorphic to
$\F_{q^n}^\times$ is $\wp(R,n)n^{\# R-1}$; the other
$\theta_0-\wp(R,n)n^{\# R-1}$ vertices have preimages with
stabilizers isomorphic to $\F_q^\times$. Moreover,
$\G_v/\F_q^\times$ acts freely on the simplices containing $v$.
Denoting $\deg_\cB^i=\deg_\cB^i(v)$ for a vertex $v\in\cB$, we can
express all $\theta_i$, $1\leq i\leq n-1$, as linear functions of
$\theta_0$
\begin{equation}\label{eqLinRel}
\theta_i=\frac{\deg^{i}_\cB}{i+1}\left(\frac{(q-1)\wp(R,n)n^{\#R
-1}}{q^n-1}+(\theta_0-\wp(R,n)n^{\#R -1})\right);
\end{equation}
we count the number of $i$-simplices in $\G\bs\cB$ containing a
given vertex, add these numbers over all vertices, and then divide
by $(i+1)$ since every $i$-simplex has exactly $(i+1)$ vertices. On
the other hand, $\chi(\G\bs\cB)=\sum_{i=0}^{n-1}(-1)^i \theta_i$. If
we substitute into this formula the expressions in (\ref{eqLinRel}),
then we get a linear relation between $\theta_0$ and $\chi$. Since
Lemma \ref{lem3.3} gives a formula for the generalized degrees
$\deg^{i}_\cB$ and Theorem \ref{thmChi} gives a formula for
$\chi(\G\bs\cB)$, one can always compute the numbers $\theta_i$ in
each specific case.

From the previous discussion, it is clear that to prove the theorem
it is enough to show $\theta_0=n$. When $R=\{x,y\}$ and
$\deg(x)=\deg(y)=1$,
$$
\chi(\G\bs\cB) =\frac{(q-1)}{(q^n-1)}((-1)^{n-1}[n-1]_q-n)+n.
$$
Now, using the linear relation between $\chi(\G\bs\cB)$ and
$\theta_0$, one easily checks that $\theta_0=n$ is equivalent to the
identity
$$
1+\sum_{i=1}^{n-1}(-1)^i \frac{1}{i+1}\sum_{\substack{\bp\in
\Par(n)\\ \ell(\bp)=i+1}} \qbinom{n}{\bp} = (-1)^{n-1}\frac{1}{n}
[n-1]_q,
$$
which is the statement of Lemma \ref{lemAndrews}.
\end{proof}

\begin{rem} Theorem \ref{LastThm} says that in the case when $R=\{x,y\}$
and $\deg(x)=\deg(y)=1$ one can visualize $\G\bs\cB$ as
$$\theta_{n-1}= \prod_{i=1}^{n-1}\frac{q^i-1}{q-1}$$
$(n-1)$-simplices glued to each other at their vertices and along
some of their higher dimensional faces. How exactly the higher
dimensional faces are glued is easy to describe for small $n$, but
becomes complicated as $n$ grows.

Suppose $n=2$. Then $\theta_0=2$, $\theta_1=1$. Hence $\G\bs\cB$ is
a segment (= two vertices joined by an edge).

Suppose $n=3$. Then $\theta_0=\theta_1=3$ and $\theta_2=q+1$. Hence
$\G\bs\cB$ consists of $(q+1)$ triangles glued together along their
boundaries.
\end{rem}



\end{document}